\newcommand{\stm}{sharp test multiplier}
\newcommand{\ste}{sharp test element}
\newcommand{\sfp}{sharply $F$-pure}
\newcommand{\sfpty}{sharp $F$-purity}
\newcommand{\sfc}{sharp Frobenius closure}
\begin{document}

\title{Generalized test ideals, sharp $F$-purity, and sharp test elements}
\author{Karl Schwede}
\begin{abstract}
Consider a pair $(R, \ba^t)$ where $R$ is a ring of positive characteristic, $\ba$ is an ideal such that $\ba \cap R^{\circ} \neq \emptyset$, and $t > 0$ is a real number.  In this situation we have the ideal $\tau_R(\ba^t)$, the generalized test ideal associated to $(R, \ba^t)$ as defined by Hara and Yoshida.  We show that $\tau_R(\ba^t) \cap R^{\circ}$ is made up of appropriately defined generalized test elements which we call \emph{\ste s}.    We also define a variant of $F$-purity for pairs, \emph{\sfpty}, which interacts well with \ste s and agrees with previously defined notions of $F$-purity in many common situations.  We show that if $(R, \ba^t)$ is \sfp, then $\tau_R(\ba^t)$ is a radical ideal.  Furthermore, by following an argument of Vassilev, we show that if $R$ is a quotient of an $F$-finite regular local ring and $(R, \ba^t)$ is \sfp, then $R/\tau_R(\ba^t)$ itself is $F$-pure.  We conclude by showing that \sfpty{ }can be used to define the $F$-pure threshold.  As an application we show that the $F$-pure threshold must be a rational number under certain hypotheses.
\end{abstract}
\thanks{The author was partially supported by a National Science Foundation postdoctoral fellowship.}
\address{Department of Mathematics\\ University of Michigan\\ East Hall,
530 Church Street \\ Ann Arbor, Michigan, 48109}
\email{kschwede@umich.edu}
\subjclass[2000]{13A35, 14B05}
\keywords{tight closure, F-pure, test ideal}
\maketitle

\section{Introduction}

In this note, we consider the various aspects of the generalized test ideal $\tau_R(\ba^t)$ associated to the pair $(R, \ba^t)$, as defined by Hara and Yoshida \cite{HaraYoshidaGeneralizationOfTightClosure}; also see \cite{TakagiInterpretationOfMultiplierIdeals}.  Here $R$ is a ring of positive characteristic $p$, $\ba$ is an ideal such that $\ba \cap R^{\circ} \neq \emptyset$ and $t > 0$ is a real number.  The ideal $\tau_R(\ba^t)$ is a positive characteristic analogue of the multiplier ideal (see \cite{SmithMultiplierTestIdeals} and \cite{HaraYoshidaGeneralizationOfTightClosure}) which has been heavily studied in recent years.  The reason for the name \emph{test ideal} is that in the classical case where $\ba = R$, $\tau(R^1) \cap R^{\circ}$ is equal to the set of tight closure test elements (elements of $R^{\circ}$ which can be used to ``test'' tight closure containments; see \cite{HochsterHunekeTC1}).

However, when tight closure for pairs $(R, \ba^t)$ was defined, see \cite{HaraYoshidaGeneralizationOfTightClosure} and also  \cite{TakagiInterpretationOfMultiplierIdeals}, a generalization of test elements, called \emph{$\ba^t$-test elements}, was introduced and studied.  In particular, it was easy to see that  $\tau_R(\ba^t) \cap R^{\circ}$ is not equal to the set of these test elements even if $R$ is regular, $\ba$ is principal and $t = 1$.  We introduce a new type of generalized test element which we call a \emph{\ste}.  An important point is that these \ste s still ``test'' tight closure containments.  Furthermore, $\tau_R(\ba^t) \cap R^{\circ}$ is in fact equal to the set of \ste s.  Thus our main result is the following.

\vskip 12pt
\hskip-12pt
{\bf Corollary \ref{TestIdealsAreSharpTestElements}}
\hskip 3pt {\it The set of  $\ba^t$-\ste s is equal to $\tau(\ba^t) \cap R^{\circ}$.}
\vskip 12pt


This definition of generalized test elements also suggests a redefinition of $F$-purity for pairs $(R, \ba^t)$, which we call \emph{\sfpty}; compare with \cite{HaraWatanabeFRegFPure} and \cite{TakagiInversion}.  Once we have this definition, we are able to generalize several results from the classical case ($\ba = R$), to the case of a pair $(R, \ba^t)$.  In particular, simply following classical arguments, we are able to show that if $(R, \ba^t)$ is \sfp, then $\tau_R(\ba^t)$ is a radical ideal; see Corollary \ref{SharpFPureImpliesRadicalTestIdeal}.  Furthermore, under certain mild hypotheses on $R$, we are able to obtain the following generalization of a theorem of Vassilev; see \cite{VassilevTestIdeals}.

\vskip 12pt
\hskip-12pt
{\bf Corollary \ref{GeneralizedVassilevSubadjunction}}
\hskip 3pt {\it Suppose $R$ is a quotient of an $F$-finite regular ring, $\ba \subseteq R$ an ideal such that $\ba \cap R^{\circ} \neq \emptyset$ and $t > 0$ a real number such that $(R, \ba^t)$ is \sfp. Then $R / \tau_R(\ba^t)$ is $F$-pure.}
\vskip12pt

\hskip-12ptThis corollary should be thought of as related to \cite[Theorem 1]{KawamataSubadjunction2}, \cite[Proposition 3.2]{AmbroSeminormalLocus}, and \cite[Theorem 5.5]{SchwedeEasyCharacterization}, all results in characteristic zero which use very different techniques.

Finally, we also show that \sfpty{ }can be used to define $\fpt (\ba)$, the  $F$-pure threshold of $(R, \ba^*)$; see \cite{TakagiWatanabeFPureThresh}.  As an easy corollary we are able to show that if $(R, \ba^{\fpt(\ba)})$ is \sfp, then $\fpt(\ba)$ is a rational number that can be written as a quotient whose denominator is not a multiple of $p = \Char R$; see Corollary \ref{CorollaryFPureThresholdIsRational}.

It should be noted that many of the arguments presented in this paper are simply generalizations of arguments from the ``classical case''.  However, this strongly suggests that the \emph{\ste} which we introduce is the correct notion of a generalized test element and that \emph{\sfpty} is the right notion of $F$-purity for pairs.
\\
\\
{\it Acknowledgements:}
\\
The author would like to thank Mircea Musta{\c{t}}{\v{a}} and Mel Hochster for several valuable discussions, thank Shunsuke Takagi for pointing out several typos in a previous draft, and also thank Mel Hochster for a careful reading of this paper.  Finally, the author would like to thank the referee for several valuable suggestions as well as pointing out several typos.

\section{Sharp Test Elements}

All rings will be assumed to be Noetherian excellent reduced and containing a field of characteristic $p > 0$.  Given a ring $R$, we define $R^{\circ}$ to be the set of all elements of $R$ not contained in any minimal prime of $R$.  If $\ba$ is an ideal of $R$ such that $\ba \cap R \neq \emptyset$, then we define $\ba^0 = R$.  If $I$ is an ideal of $R$, then we define $I^{[p^e]}$ to be the ideal generated by the $p^e$th powers of elements of $I$.

\begin{definition} \cite{HaraYoshidaGeneralizationOfTightClosure} \cite{TakagiInterpretationOfMultiplierIdeals} \cite{HochsterHunekeTC1}
\label{DefinitionTestIdeal}
Let $R$ be a reduced ring and let $I$ be an ideal of $R$.  Further suppose that $\ba$ is an ideal such that $\ba \cap R^{\circ} \neq \emptyset$ and $t > 0$ is a real number.  We say that $z$ is in the \emph{$\ba^t$-tight closure of $I$}, denoted $I^{*\ba^t}$, if there exists a $c \in R^{\circ}$ such that
\[
c \ba^{\lceil t p^e \rceil} z^{p^e} \subseteq I^{[p^e]}
\]
for all $e \gg 0$.
\end{definition}

The next two lemmas show that we can alter certain conditions in the definition of $I^{* \ba^t}$ without changing $I^{*\ba^t}$.

\begin{lemma}  Assume the notation of Definition \ref{DefinitionTestIdeal}.  Then $z \in I^{*\ba^t}$ if and only if there exists some $c \in R^{\circ}$ such that
\[
c \ba^{\lceil t p^e \rceil} z^{p^e} \subseteq I^{[p^e]}
\]
for all $e \geq 0$.
\end{lemma}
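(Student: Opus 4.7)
My plan is as follows. The ``if'' direction is immediate from the definition of $I^{*\ba^t}$. For the ``only if'' direction, suppose $c \in R^{\circ}$ satisfies $c \ba^{\lceil tp^e \rceil} z^{p^e} \subseteq I^{[p^e]}$ for all $e \geq e_0$; the task is to produce a possibly different $c' \in R^{\circ}$ for which the same containment holds for every $e \geq 0$.

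I would pick $a \in \ba \cap R^{\circ}$ (available by hypothesis) and set $M = p^{e_0}$, $N = \lceil tM\rceil$. My first candidate is $c' = ca^N z^M$. For $e \geq e_0$, the required containment follows just by multiplying the hypothesis by $a^Nz^M$. For $e < e_0$, I combine the inclusions $a^N \ba^{\lceil tp^e\rceil} \subseteq \ba^{N+\lceil tp^e\rceil} \subseteq \ba^{\lceil tM\rceil}$ and $I^{[M]} \subseteq I^{[p^e]}$ (using $M = p^{e_0} \geq p^e$) with the hypothesis at $e = e_0$ to obtain
\[
(ca^Nz^M)\,\ba^{\lceil tp^e\rceil}z^{p^e} \;\subseteq\; z^{p^e}\bigl(c\ba^{\lceil tM\rceil}z^M\bigr) \;\subseteq\; z^{p^e}I^{[M]} \;\subseteq\; I^{[p^e]}.
\]

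The sole obstacle is that $ca^Nz^M$ need not lie in $R^{\circ}$ when $z$ is contained in some (but not all) minimal primes of $R$; in that situation the product is killed in every minimal prime containing $z$. To repair this, let $S$ and $T$ denote the sets of minimal primes not containing $z$ and containing $z$, respectively. Distinct minimal primes are incomparable, so $\bigcap_{P \in S} P \not\subseteq P'$ for any $P' \in T$, and prime avoidance produces $d \in \bigcap_{P \in S} P$ with $d \notin \bigcup_{P' \in T} P'$. Then $dz$ lies in every minimal prime, and since $R$ is reduced $dz = 0$, hence $dz^{p^e} = 0$ for all $e \geq 0$. Replacing $c'$ by $ca^Nz^M + d$ and examining residues modulo each minimal prime shows $c' \in R^{\circ}$, while the added summand contributes nothing to $c'\ba^{\lceil tp^e\rceil}z^{p^e}$. (In the degenerate cases $z = 0$ or $z \in R^{\circ}$ one simply takes $c' = c$ or $c' = ca^Nz^M$.) The key step, and the only nontrivial one, is this prime-avoidance construction of the corrector $d$; the rest is a direct manipulation of the defining containments.
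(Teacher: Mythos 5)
Your proof is correct, but it follows a genuinely different route from the paper's. The paper's argument (following Hochster--Huneke, Proposition 4.1(c)) localizes at $R^{\circ}$: since $R$ is reduced, $S = (R^{\circ})^{-1}R$ is a product of fields, so every ideal of $S$ is radical and $I^{[q]}S = IS$, $\ba^{\lceil tq\rceil}S = S$. From the hypothesis one deduces $z \in IS$, hence $z^{p^e} \in I^{[p^e]}S$ for every $e$; clearing denominators for the finitely many $e \leq e_0$ yields elements $c_e \in R^{\circ}$ with $c_e z^{p^e} \in I^{[p^e]}$, and the final multiplier is the product $c'\prod_{e \leq e_0} c_e$. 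Your argument avoids the passage to $S$ entirely: you build the candidate multiplier $ca^N z^M$ by hand, handle $e < e_0$ by bootstrapping down from the hypothesis at $e = e_0$ and the inclusion $I^{[M]} \subseteq I^{[p^e]}$, and then fix the genuine defect that $z^M$ may lie in some minimal primes by adding a corrector $d$ with $dz = 0$ chosen by prime avoidance. Both approaches are valid; the paper's is the idiomatic tight-closure move (shorter once you know the $S$-is-a-product-of-fields trick), while yours is more elementary and self-contained, at the cost of the prime-avoidance patch, which is the real content of your proof and which you identify correctly.
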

\begin{proof}
We follow the main idea of the proof of \cite[Proposition 4.1(c)]{HochsterHunekeTC1}.  Suppose that there exists a $c'$ such that $c' \ba^{\lceil t p^e \rceil} z^{p^e} \subseteq I^{[p^e]}$ for all $e > e_0$.  Since $R$ is reduced, we note that $S = (R^{\circ})^{-1} R$ is a product of fields, and thus every ideal in $S$ is radical.  Therefore $I^{[q]} S = IS$ for all $q = p^e$.  Also note that since $\ba \cap R^{\circ} \neq \emptyset$ we have $\ba^{\lceil tq \rceil} S = S$.  But now if $c' \ba^{\lceil t p^e \rceil} z^{p^e} \subseteq I^{[p^e]}$ for $e > e_0$, we see that $z \in IS$ and furthermore that $z^{p^e} \in (IS)^{[p^e]} = I^{[p^e]} S$ for all $e$, and in particular, for each $e \leq e_0$.  Thus we can pick $c_e \in R^{\circ}$ so that $c_e z^{p^e} \in I^{[p^e]}$ for each $e \leq e_0$.  We set $c = c' \prod_{i = 1, \ldots, e_0} c_i$.  It follows easily that $c \ba^{\lceil t p^e \rceil} z^{p^e} \subseteq I^{[p^e]}$ for all $e \geq 0$.
\end{proof}

\begin{lemma}
\label{RightCharacterizationOfTightClosure}
Assume the notation of Definition \ref{DefinitionTestIdeal}.  Then $z \in I^{*\ba^t}$ if and only if there exists some $c \in R^{\circ}$ such that
\[
c \ba^{\lceil t (p^e - 1) \rceil} z^{p^e} \subseteq I^{[p^e]}.
\]
for all $e \geq 0$.
\end{lemma}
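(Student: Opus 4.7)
The plan is to show the two conditions are equivalent by the straightforward inclusion in one direction and a small modification of the multiplier $c$ in the other. Notice that $\lceil t(p^e-1)\rceil \leq \lceil tp^e\rceil$ for all $e \geq 0$, so $\ba^{\lceil tp^e\rceil} \subseteq \ba^{\lceil t(p^e-1)\rceil}$. This immediately gives the ``$\Leftarrow$'' direction: if $c \in R^\circ$ satisfies $c \ba^{\lceil t(p^e-1)\rceil} z^{p^e} \subseteq I^{[p^e]}$ for all $e \geq 0$, then certainly $c \ba^{\lceil tp^e\rceil} z^{p^e} \subseteq I^{[p^e]}$ for all $e \geq 0$, placing $z$ in $I^{*\ba^t}$ by the preceding lemma.

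For the ``$\Rightarrow$'' direction, assume $z \in I^{*\ba^t}$, and by the previous lemma choose $c' \in R^\circ$ with $c' \ba^{\lceil tp^e\rceil} z^{p^e} \subseteq I^{[p^e]}$ for every $e \geq 0$. The goal is to absorb the ``missing'' factor $\ba^{\lceil tp^e\rceil - \lceil t(p^e-1)\rceil}$ into a new multiplier. A simple estimate using $\lceil x\rceil \leq x + 1$ and $\lceil y\rceil \geq y$ gives
\[
\lceil tp^e\rceil - \lceil t(p^e-1)\rceil \leq tp^e + 1 - t(p^e-1) = t + 1 \leq \lceil t\rceil + 1,
\]
which is a single integer bound, independent of $e$.

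Now pick any $a \in \ba \cap R^\circ$ (nonempty by hypothesis) and set $c := c' a^{\lceil t\rceil + 1}$. Since $R^\circ$ is multiplicatively closed, $c \in R^\circ$. For any $b \in \ba^{\lceil t(p^e-1)\rceil}$ we have $a^{\lceil t\rceil + 1} b \in \ba^{\lceil t\rceil + 1 + \lceil t(p^e-1)\rceil} \subseteq \ba^{\lceil tp^e\rceil}$ by the estimate, so
\[
c\, b\, z^{p^e} = c' \cdot (a^{\lceil t\rceil + 1} b) \cdot z^{p^e} \in c' \ba^{\lceil tp^e\rceil} z^{p^e} \subseteq I^{[p^e]}.
\]
This produces the required $c$.

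The only obstacle worth flagging is the uniform-in-$e$ bound on $\lceil tp^e\rceil - \lceil t(p^e-1)\rceil$: without it, one would need $e$-dependent multipliers. Because this difference is bounded by $t+1$, a single power of a fixed element $a \in \ba \cap R^\circ$ suffices, and the argument proceeds. No use of Frobenius, reducedness, or localization is needed beyond what is already encoded in the previous lemma.
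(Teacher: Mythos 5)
Your proof is correct and follows essentially the same route as the paper's: the easy direction from $\ba^{\lceil tp^e\rceil} \subseteq \ba^{\lceil t(p^e-1)\rceil}$, and for the other direction, absorbing the uniformly bounded gap between the exponents by multiplying the old test multiplier by $a^n$ for a fixed $a \in \ba \cap R^\circ$, where $n = \lceil t\rceil + 1 = \lceil t+1\rceil$ is precisely the exponent the paper uses. The only difference is cosmetic: you spell out the ceiling estimate and the trivial converse, which the paper leaves implicit.
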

\begin{proof}
The idea of the proof is essentially  the same as in \cite[Proposition 2.2]{HaraWatanabeFRegFPure}.
 Suppose that there exists a $c \in R^{\circ}$ such that $c \ba^{\lceil t p^e \rceil} z^{p^e} \in I^{[p^e]}$ for all $e \geq 0$.  Let $n$ be an exponent such that $\ba^n \ba^{\lceil t (p^e - 1) \rceil} \subseteq (\ba^{\lceil t p^e \rceil})$ for all $e \geq 0$ (for example, $n = \lceil t + 1 \rceil$ works).  Choose $f \in \ba \cap R^{\circ}$.  Then note that $(c f^n) \ba^{\lceil t (p^e - 1) \rceil} z^{p^e} \subseteq (c \ba^n) \ba^{\lceil t (p^e - 1) \rceil} z^{p^e} \subseteq I^{[p^e]}$ for all $e \geq 0$.
\end{proof}

It is the characterization of $I^{*\ba^t}$ from Lemma \ref{RightCharacterizationOfTightClosure} that we will make use of for the remainder of the paper.


We now remind the reader of the definition of the generalized test ideal.

\begin{definition}  \cite{HaraYoshidaGeneralizationOfTightClosure}
Assume the notation of \ref{DefinitionTestIdeal}.  Recall that the \emph{generalized test ideal of $\ba^t$}, denoted by $\tau(\ba^t)$ is simply defined to be
\[
\bigcap_{J \subseteq R} (J : J^{*\ba^t}) \text{ where $J$ runs through all ideals of $R$.}
\]
\end{definition}

We next define a new type of test element, the \emph{\ste}.

\begin{definition}
We define $c \in R$ to be an \emph{$\ba^t$-\stm} for $R$ if, for all ideals $I$ and all $z \in I^{*\ba^t}$, we have $c \ba^{\lceil t (p^e - 1) \rceil} z^{p^e} \subseteq I^{[p^e]}$ for all $e \geq 0$.
We say that $c \in R^{\circ}$ is an \emph{$\ba^t$-\ste} if $c$ is also an $\ba^t$-\stm.
\end{definition}

Notice first that if $c$ is an $\ba^t$-\stm, then $c$ is in $\tau(\ba^t)$ because we can always consider the situation where $e = 0$.  We will show that the reverse inclusion also holds.  First we need a lemma.

\begin{lemma}
\label{LemmaPowerIsInTightClosure}
Suppose that $z \in I^{*\ba^t}$.  Then for every power $q = p^e$, $e \geq 0$ we have $\ba^{\lceil t(q - 1) \rceil} z^{q} \subseteq (I^{[q]})^{*\ba^t}$.
\end{lemma}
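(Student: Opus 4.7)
The plan is to take a witness $c \in R^\circ$ for the $\ba^t$-tight closure containment $z \in I^{*\ba^t}$ (as given by the reformulation in Lemma \ref{RightCharacterizationOfTightClosure}) and show that the same $c$ serves as a witness for $y \in (I^{[q]})^{*\ba^t}$ whenever $y$ lies in $\ba^{\lceil t(q-1)\rceil} z^q$. Since the desired containment is $R$-linear in $y$, I may assume $y = a z^q$ with $a \in \ba^{\lceil t(q-1) \rceil}$.

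First, I would fix an arbitrary power $q' = p^{e'}$ and compute:
\[
c \, \ba^{\lceil t(q'-1)\rceil} \, y^{q'} \;=\; c \, \ba^{\lceil t(q'-1)\rceil} \, a^{q'} \, z^{q q'}
\;\subseteq\; c \, \ba^{\lceil t(q'-1)\rceil + q'\lceil t(q-1)\rceil} \, z^{q q'},
\]
using that $a^{q'} \in \ba^{q'\lceil t(q-1)\rceil}$. The original hypothesis on $c$, applied at exponent $e+e'$, gives $c\,\ba^{\lceil t(qq'-1)\rceil} z^{qq'} \subseteq I^{[qq']} = (I^{[q]})^{[q']}$. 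So the whole argument reduces to the purely arithmetic estimate
\[
\lceil t(q'-1)\rceil + q'\lceil t(q-1)\rceil \;\geq\; \lceil t(qq' - 1)\rceil.
\]

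This inequality is the one moderately delicate step, but it is straightforward: dropping the ceilings on the left gives a lower bound of $t(q'-1) + q't(q-1) = t(qq'-1)$, and since the left-hand side is an integer, it dominates $\lceil t(qq'-1)\rceil$ as well. Thus $\ba^{\lceil t(q'-1)\rceil + q'\lceil t(q-1)\rceil} \subseteq \ba^{\lceil t(qq'-1)\rceil}$, and the chain of containments above yields
\[
c \, \ba^{\lceil t(q'-1)\rceil} \, y^{q'} \;\subseteq\; I^{[qq']} = (I^{[q]})^{[q']},
\]
for every $e' \geq 0$. By Lemma \ref{RightCharacterizationOfTightClosure} this exactly says $y \in (I^{[q]})^{*\ba^t}$, completing the proof. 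No step looks like a real obstacle; the only thing to watch is getting the ceiling arithmetic right, since the exponents come from the ``$p^e - 1$'' version of the definition rather than the ``$p^e$'' version.
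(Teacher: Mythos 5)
Your proof is correct and follows essentially the same path as the paper's: reduce to a generator $y = a z^q$ with $a \in \ba^{\lceil t(q-1)\rceil}$, reuse the original witness $c$, observe that $a^{q'}$ lands in $\ba^{q'\lceil t(q-1)\rceil}$, and then close the argument with the integer-ceiling inequality $\lceil t(q'-1)\rceil + q'\lceil t(q-1)\rceil \geq \lceil t(qq'-1)\rceil$. The paper phrases the reduction via an arbitrary $f \in \ba^{\lceil t(q-1)\rceil}$ rather than an appeal to linearity, but the content is identical.
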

\begin{proof}
By assumption, we know that there exists a $c \in R^{\circ}$ such that $c \ba^{\lceil t(p^d - 1) \rceil} z^{p^d} \subseteq I^{[p^d]}$ for all $d \geq 0$.  For every $f \in \ba^{\lceil t(q - 1) \rceil}$, we wish to show that there exists a $c' \in R^{\circ}$ such that for all $d \geq 0$,
\[
c' \ba^{\lceil t(p^d - 1) \rceil} (f z^{q})^{p^d} \subseteq (I^{[q]})^{[p^d]} = I^{[p^{e+d}]}.
\]
But note that $\ba^{\lceil t(p^d - 1) \rceil} (f z^{q})^{p^d} \subseteq \ba^{\lceil t(p^d - 1) \rceil} (\ba^{\lceil t(q - 1) \rceil} z^{q})^{p^d}$ and so it is enough to show that
\[
c' \ba^{\lceil t(p^d - 1) \rceil} (\ba^{\lceil t(q - 1) \rceil} z^{q})^{p^d} \subseteq (I^{[q]})^{[p^d]} = I^{[p^{e+d}]}.
\]
Now we observe that $\ba^{\lceil t(p^d - 1) \rceil} (\ba^{\lceil t(q - 1) \rceil} z^{q})^{p^d} = \ba^{(\lceil t(p^d - 1) \rceil + p^d \lceil t(p^e - 1) \rceil)} z^{p^{e+d}}$.
Set $c' = c$, and we see it is sufficient to check that $\lceil t(p^d - 1) \rceil + p^d \lceil t(p^e - 1) \rceil \geq \lceil t(p^{e+d} - 1) \rceil$ for all $d \geq 0$.  However, it is easy to see that
\[
\lceil t(p^d - 1) \rceil + p^d \lceil t(p^e - 1) \rceil \geq t(p^d - 1) + p^d t(p^e - 1) = t(p^d - 1 + p^{d+e}-p^d) = t (p^{d+e} - 1)
\]
But the left side is an integer, and our needed inequality is proven.
\end{proof}

\begin{theorem}
\label{TestIdealsAreSharpTestMultipliers}
The set of  $\ba^t$-\stm s  is equal to $\tau(\ba^t)$.
\end{theorem}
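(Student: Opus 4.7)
My plan is to prove the two containments separately, using Lemma~\ref{LemmaPowerIsInTightClosure} as the key reduction for the nontrivial direction.

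For the containment $\{\ba^t\text{-\stm s}\} \subseteq \tau(\ba^t)$, I would just repeat the observation already made in the paragraph preceding Lemma~\ref{LemmaPowerIsInTightClosure}: if $c$ is an $\ba^t$-\stm, then applying the defining condition with $e = 0$ yields $c z \in I$ for every ideal $I$ and every $z \in I^{*\ba^t}$ (since $\ba^{\lceil t(1-1)\rceil} = \ba^0 = R$). Thus $c \in (I : I^{*\ba^t})$ for every $I$, and hence $c \in \tau(\ba^t)$.

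For the reverse containment, which is the substantive direction, the plan is to reduce the $e$-dependent condition in the definition of \stm{} to an $e=0$ condition, but at the cost of replacing the ideal $I$ by its Frobenius power $I^{[p^e]}$. Concretely, fix $c \in \tau(\ba^t)$, an ideal $I$, an element $z \in I^{*\ba^t}$, and an exponent $e \geq 0$. By Lemma~\ref{LemmaPowerIsInTightClosure}, every element of $\ba^{\lceil t(p^e - 1)\rceil} z^{p^e}$ lies in $(I^{[p^e]})^{*\ba^t}$. Since $c \in \tau(\ba^t) \subseteq (I^{[p^e]} : (I^{[p^e]})^{*\ba^t})$, multiplying by $c$ sends $(I^{[p^e]})^{*\ba^t}$ into $I^{[p^e]}$, giving
\[
c \, \ba^{\lceil t(p^e - 1)\rceil} z^{p^e} \subseteq I^{[p^e]}
\]
as required. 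This shows that $c$ is an $\ba^t$-\stm.

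The only nontrivial ingredient here is Lemma~\ref{LemmaPowerIsInTightClosure}, which has already been established with the ceiling inequality $\lceil t(p^d-1)\rceil + p^d \lceil t(p^e-1)\rceil \geq \lceil t(p^{e+d}-1)\rceil$; once that lemma is in hand, the theorem is essentially formal. Thus I do not expect any serious obstacle beyond quoting the lemma and the definition of $\tau(\ba^t)$ appropriately. The conceptual content of the result is precisely that $\tau(\ba^t)$, defined a priori via an $e=0$ colon condition, automatically enforces the full family of $e$-indexed containments required by the definition of \stm, because $\ba^t$-tight closure is compatible with passing from $I$ to $I^{[p^e]}$ in the manner recorded by Lemma~\ref{LemmaPowerIsInTightClosure}.
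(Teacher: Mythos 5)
Your proof is correct and follows essentially the same route as the paper: the easy containment is the $e=0$ observation (which the paper records in the paragraph preceding Lemma~\ref{LemmaPowerIsInTightClosure}), and the substantive containment is exactly the paper's application of Lemma~\ref{LemmaPowerIsInTightClosure} combined with the colon-ideal characterization of $\tau(\ba^t)$.
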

\begin{proof}
Suppose that $c \in \tau(\ba^t)$ and suppose $z \in I^{*\ba^t}$.  But then, using Lemma \ref{LemmaPowerIsInTightClosure}, for every $e \geq 0$ with $q = p^e$ we have,
\[
\ba^{\lceil t(q - 1) \rceil} z^{q} \subseteq (I^{[q]})^{*\ba^t}
\]
and so
\[
c \ba^{\lceil t(q - 1) \rceil} z^{q} \subseteq I^{[q]}
\]
which proves that $c$ is an $\ba^t$-\stm.
\end{proof}

\begin{corollary}
\label{TestIdealsAreSharpTestElements}
The set of  $\ba^t$-\ste s is equal to $\tau(\ba^t) \cap R^{\circ}$.
\end{corollary}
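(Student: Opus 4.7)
The plan is to observe that this corollary is an immediate consequence of Theorem \ref{TestIdealsAreSharpTestMultipliers} combined with the definition of a sharp test element. Recall that, by definition, $c \in R$ is an $\ba^t$-\ste{} precisely when $c$ is an $\ba^t$-\stm{} and $c \in R^\circ$. Thus the set of $\ba^t$-\ste s is, tautologically, the intersection of the set of $\ba^t$-\stm s with $R^\circ$.

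Now I would invoke Theorem \ref{TestIdealsAreSharpTestMultipliers}, which identifies the set of $\ba^t$-\stm s with $\tau(\ba^t)$. Intersecting both sides of this equality with $R^\circ$ yields
\[
\{\ba^t\text{-\ste s}\} \;=\; \{\ba^t\text{-\stm s}\} \cap R^\circ \;=\; \tau(\ba^t) \cap R^\circ,
\]
which is exactly the statement of the corollary. There is no real obstacle here; the entire content of the result has already been placed in Theorem \ref{TestIdealsAreSharpTestMultipliers}, and the corollary just records what happens when one restricts to elements of $R^\circ$ (i.e.\ those multipliers that are genuine ``test elements'' in the classical sense of being nonzerodivisors on every component).
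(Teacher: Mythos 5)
Your proof is correct and is precisely the deduction the paper intends: the corollary is stated without proof immediately after Theorem \ref{TestIdealsAreSharpTestMultipliers} because, as you observe, intersecting both sides of that theorem's equality with $R^\circ$ and unwinding the definition of a \ste{} gives the result at once.
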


We can also show that \ste s exist.

\begin{theorem}
Suppose we are given a reduced $F$-finite ring $R$, an ideal $\ba \subseteq R$ such that $\ba \cap R^{\circ} \neq \emptyset$ and a real number $t > 0$.  Then $\ba^t$-\ste s exist; in other words, $\tau(\ba^t) \cap R^{\circ} \neq \emptyset$.
\end{theorem}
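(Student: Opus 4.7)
By Theorem~\ref{TestIdealsAreSharpTestMultipliers}, the statement is equivalent to showing $\tau(\ba^t) \cap R^{\circ} \neq \emptyset$. The plan is to first produce a ``classical'' generalized test element for the pair $(R,\ba^t)$, and then upgrade it to a \ste{} via the multiplication trick already used in Lemma~\ref{RightCharacterizationOfTightClosure}.

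For the first step, I would invoke the existence theorem of Hara--Yoshida (generalizing the classical Hochster--Huneke argument for $F$-finite reduced rings). Concretely, since $R$ is $F$-finite, hence excellent, and reduced, the regular locus is open and meets $R^{\circ}$; choose $d \in R^{\circ}$ such that $R_d$ is regular, and hence (by $F$-finiteness) strongly $F$-regular. The standard $F$-finite splitting argument then yields some power $c = d^M \in R^{\circ}$ such that for every ideal $I$, every $z \in I^{*\ba^t}$, and every $e \geq 0$,
\[
c\, \ba^{\lceil tp^e \rceil}\, z^{p^e} \subseteq I^{[p^e]}.
\]
The promotion from ``$e \gg 0$'' to ``all $e \geq 0$'' is handled exactly as in the first lemma of this section (the reduced total quotient ring argument, applied uniformly).

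For the second step, I mimic Lemma~\ref{RightCharacterizationOfTightClosure}. Fix $f \in \ba \cap R^{\circ}$ and choose an integer $n$ (for instance $n = \lceil t+1\rceil$) such that $f^n\, \ba^{\lceil t(p^e - 1)\rceil} \subseteq \ba^{\lceil tp^e \rceil}$ for every $e \geq 0$. Then for any ideal $I$ and any $z \in I^{*\ba^t}$,
\[
(cf^n)\, \ba^{\lceil t(p^e - 1)\rceil}\, z^{p^e} \;\subseteq\; c\, \ba^{\lceil tp^e \rceil}\, z^{p^e} \;\subseteq\; I^{[p^e]}
\]
for every $e \geq 0$. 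Thus $cf^n$ is an $\ba^t$-\stm, and because $c, f \in R^{\circ}$ and $R$ is reduced we have $cf^n \in R^{\circ}$, so $cf^n$ is an $\ba^t$-\ste.

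The main obstacle is the first step, i.e., producing an ``ordinary'' $\ba^t$-test element $c$ with the exponent $\lceil tp^e \rceil$. This is essentially the Hara--Yoshida extension of the Hochster--Huneke theorem and, while conceptually standard, requires the full $F$-finite strong $F$-regularity/splitting machinery. Once it is in hand, the promotion to the sharp setting is the cheap multiplication trick of Lemma~\ref{RightCharacterizationOfTightClosure}.
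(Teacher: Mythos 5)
Your proof is correct and takes essentially the same approach as the paper: cite \cite[Theorem 6.4]{HaraYoshidaGeneralizationOfTightClosure} to produce $c \in R^{\circ}$ with the uniform property $c\,\ba^{\lceil tp^e\rceil}z^{p^e} \subseteq I^{[p^e]}$ for all $e \geq 0$, then multiply by an element of $R^{\circ}$ (your $f^n$, the paper's $d$) carrying $\ba^{\lceil t(p^e-1)\rceil}$ into $\ba^{\lceil tp^e\rceil}$ for all $e$. The extra detail you supply about the inner workings of the Hara--Yoshida theorem and the ``$e \gg 0$'' to ``all $e$'' upgrade is fine but not needed, since the cited theorem already gives the statement for all $e \geq 0$.
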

\begin{proof}
By hypothesis, $R$ is $F$-finite, reduced and $\ba \cap R^{\circ} \neq \emptyset$.  Thus it follows from \cite[Theorem 6.4]{HaraYoshidaGeneralizationOfTightClosure} that there exists some element $c \in R^{\circ}$ such that for all ideals $I$, all $z \in I^{* \ba^t}$ and all $e \geq 0$, we have $c \ba^{\lceil tp^e \rceil} z^{p^e} \subseteq I^{[p^e]}$.  Choose an element $d \in R^{\circ}$ such that $d \ba^{\lceil t(p^e - 1) \rceil} \subseteq \ba^{\lceil t p^e \rceil}$ for all $e \geq 0$ (note that such an element exists since $\ba \cap R^{\circ} \neq \emptyset$).  Then $c' = cd \in R^{\circ}$ is a sharp test element since
\[
c' \ba^{\lceil t(p^e - 1) \rceil} z^{p^e} = c d \ba^{\lceil t(p^e - 1) \rceil} z^{p^e} \subseteq c \ba^{\lceil tp^e \rceil} z^{p^e} \subseteq I^{[p^e]}.
\]
\end{proof}

See \cite[Section 2]{HaraTakagiOnAGeneralizationOfTestIdeals} for a comparison of a different sort of test element with elements of the generalized test ideal.



\section{\sfpty}

In this section we define and study a variant of $F$-purity for pairs that works well with these \ste s.  We use the following notation throughout this section. Let $R$ be an $F$-finite reduced ring of positive characteristic $p > 0$.  Further suppose that $\ba$ is an ideal with $\ba \cap R^{\circ} \neq \emptyset$ and $t > 0$ is a real number.  If $\ba = (f)$ is a principal ideal, then we will write $(R, f^t)$ for the pair $(R, \ba^t)$.

Recall that a pair $(R, \ba^t)$ is said to be \emph{$F$-pure} if for every $e \gg 0$ there exists a $f \in \ba^{\lfloor t(p^e-1) \rfloor}$, such that the map
\[
f^{1 \over p^e} F^e : R \rightarrow R^{1\over p^e},
\]
that sends $1$ to $f^{1 \over p^e}$, splits.  Recall that $R$ itself is said to be $F$-pure if $(R, R^1)$ is $F$-pure.  See \cite[Definition 3.1]{TakagiInversion} and compare with \cite[Section 2]{HaraWatanabeFRegFPure} and \cite{HochsterRobertsFrobeniusLocalCohomology}.  Also recall that a pair $(R, \ba^t)$ is said to be \emph{strongly $F$-pure} if there exists $e > 0$ (equivalently for all $e \gg 0$) and $f \in \ba^{\lceil t p^e \rceil}$ such that the map $f^{1 \over p^e} F^e$ splits; see \cite{TakagiWatanabeFPureThresh}.  Note that a strongly $F$-pure pair is $F$-pure; see \cite[Proposition 1.5]{TakagiWatanabeFPureThresh}.

\begin{definition}
We say that $R$ is \emph{$\ba^t$-\sfp}, or that the pair $(R, \ba^t)$ is \emph{\sfp}, if there exist infinitely many integers $e \geq 0$ and associated elements $f_e \in \ba^{\lceil t(p^e-1) \rceil}$ such that each map
\[
f_e^{1 \over p^e} F^e : R \rightarrow R^{1\over p^e},
\]
which sends $1$ to $f_e^{1 \over p^e}$, splits.
\end{definition}

\begin{remark}
\label{RemarkFPureImpliesSharplyFPure}
It is easy to see that if $(R, \ba^t)$ is $F$-pure in the usual sense and $(p^e - 1) t$ is an integer for infinitely many $e > 0$ (equivalently, $(p^e - 1) t$ is an integer for some $e > 0$, equivalently $t$ is a rational number which can be written with an integer denominator which is not a multiple of $p$), then $R$ is $\ba^t$-\sfp.  Furthermore, if $(R, \ba^t)$ is strongly $F$-pure, it is clearly both sharply $F$-pure and $F$-pure.
\end{remark}

It turns out that one only needs to exhibit a single splitting in order to prove that a pair is \sfp, as the following proposition shows.

\begin{proposition}
\label{OneSplittingImpliesAllSplitting}
Suppose that there exists a single $e > 0$ and an element $f \in \ba^{\lceil t(p^e-1) \rceil}$ such that the map
$f^{1 \over p^e} F^e : R \rightarrow R^{1\over p^e}$ splits.  Then $(R, \ba^t)$ is \sfp.
\end{proposition}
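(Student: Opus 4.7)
The plan is to show sharp $F$-purity by iterating the one given splitting to produce splittings at all multiples $me$ of $e$. This yields infinitely many witnesses, which is what \sfpty{} requires. The key reformulation is to convert the splitting of $f^{1/p^e} F^e \colon R \to R^{1/p^e}$ into an $R$-linear map $\phi \colon R^{1/p^e} \to R$ sending $f^{1/p^e} \mapsto 1$, or equivalently, into a $p^{-e}$-linear map $\phi \colon R \to R$ satisfying $\phi(r^{p^e} s) = r\phi(s)$ and $\phi(f) = 1$. Then $\phi^{m} := \phi \circ \cdots \circ \phi$ ($m$-fold composition) is $p^{-me}$-linear by an immediate induction, and corresponds to an $R$-linear map $R^{1/p^{me}} \to R$.

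Next I would choose the right element at level $me$. Set
\[
g_m \;:=\; f^{\,1 + p^e + p^{2e} + \cdots + p^{(m-1)e}} \;=\; f^{(p^{me}-1)/(p^e-1)},
\]
so that $g_m = g_{m-1} \cdot f^{p^{(m-1)e}}$. Then I would show by induction on $m$ that $\phi^{m}(g_m) = 1$. The inductive step uses $p^{-(m-1)e}$-linearity of $\phi^{m-1}$ applied to $f^{p^{(m-1)e}} \cdot g_{m-1}$ to get $\phi^{m-1}(g_m) = f \cdot \phi^{m-1}(g_{m-1}) = f$, and then $\phi^{m}(g_m) = \phi(f) = 1$. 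This precisely says that the corresponding $R$-linear map $R^{1/p^{me}} \to R$ splits $g_m^{1/p^{me}} F^{me}$.

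It remains to verify the ideal-membership condition $g_m \in \ba^{\lceil t(p^{me}-1)\rceil}$. Since $f \in \ba^{\lceil t(p^e-1)\rceil}$, one has $g_m \in \ba^{N}$ with $N = \frac{p^{me}-1}{p^e-1}\,\lceil t(p^e-1)\rceil$. Using $\lceil t(p^e-1)\rceil \geq t(p^e-1)$ and the identity $\frac{p^{me}-1}{p^e-1}\cdot(p^e-1) = p^{me}-1$ gives $N \geq t(p^{me}-1)$; since $N$ is an integer, $N \geq \lceil t(p^{me}-1)\rceil$. Thus for each $m \geq 1$ the element $g_m$ lies in the correct power of $\ba$ and produces a splitting of $g_m^{1/p^{me}} F^{me}$, giving infinitely many witnesses and establishing that $(R, \ba^t)$ is \sfp.

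The proof is essentially bookkeeping; the only step requiring care is matching up the iteration with the ideal condition. The main obstacle is really just choosing the correct exponent on $f$: the geometric sum $1 + p^e + \cdots + p^{(m-1)e}$ is exactly what makes both the composition $\phi^{m}(g_m) = 1$ work via telescoping linearity and the containment $g_m \in \ba^{\lceil t(p^{me}-1)\rceil}$ work via the identity $(1 + p^e + \cdots + p^{(m-1)e})(p^e - 1) = p^{me}-1$.
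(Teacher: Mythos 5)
Your proof is correct and takes essentially the same approach as the paper: iterate the given splitting to produce splittings at every exponent $me$, observe that the resulting witness element is $f^{1+p^e+\cdots+p^{(m-1)e}}$, and verify the ceiling arithmetic $(1+p^e+\cdots+p^{(m-1)e})\lceil t(p^e-1)\rceil \geq \lceil t(p^{me}-1)\rceil$ by bounding below by $t(p^{me}-1)$ and using integrality. The only stylistic difference is that you phrase the iteration explicitly in terms of the $p^{-e}$-linear splitting map $\phi$ and prove $\phi^m(g_m)=1$ by induction, whereas the paper simply composes $f^{1/p^e}F^e$ with itself $n$ times and notes the composition splits by construction; both are the same argument.
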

\begin{proof}
It is sufficient to show that for all $d = ne$, $n$ a positive integer, there exists an element $g \in \ba^{\lceil t(p^d-1) \rceil}$ such that $g^{1 \over p^d} F^d : R \rightarrow R^{1\over p^d}$ splits.  By using the identification of $R^{1 \over p^e}$ with $R$ (since $R$ is reduced) we compose the map $f^{1 \over p^e} F^e $ with itself $n$-times.  This is a map from $R$ to $R^{1 \over p^{ne}}$ which sends $1$ to $f^{1 + p^e + p^{2e} + \dots + p^{(n-1)e} \over p^{ne}}$, and this map splits by construction.  Thus, it is sufficient to show that $f^{1 +p^e + \dots + p^{(n-1)e}} \in \ba^{\lceil t(p^{ne} - 1) \rceil}$.  However, we do know that
\[
f^{1+ \dots + p^{(n-1)e}} \in \ba^{(1+ \dots + p^{(n-1)e})\lceil t(p^e-1) \rceil}.
\]
Thus it is sufficient to show that
\[
(1+ \dots + p^{(n-1)e})\lceil t(p^e-1) \rceil \geq \lceil t(p^{ne} - 1) \rceil.
\]
But we do know that
\[
(1+ \dots + p^{(n-1)e})\lceil t(p^e-1) \rceil \geq (1+ \dots + p^{(n-1)e}) t(p^e-1) =  t(p^{ne}-1)
\]
Thus, since the left side is an integer, we are done.
\end{proof}

\begin{corollary}
\label{SharplyFPureImpliesSharplyFPureElement}
If there is an $f \in R$ such that the map $f^{1 \over p^e} F^e : R \rightarrow R^{1\over p^e}$ splits, then $(R, f^{1 \over p^e - 1})$ is \sfp.
\end{corollary}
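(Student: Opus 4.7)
The plan is to invoke Proposition \ref{OneSplittingImpliesAllSplitting} directly with a carefully chosen exponent, and observe that with $t = 1/(p^e-1)$ the ceiling in the hypothesis collapses to $1$, so the given splitting itself already witnesses sharp $F$-purity.

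More concretely, set $\ba = (f)$ and $t = 1/(p^e-1)$. I would take the exponent in Proposition \ref{OneSplittingImpliesAllSplitting} to be this same $e$. Then
\[
\lceil t(p^e - 1) \rceil = \left\lceil \tfrac{p^e - 1}{p^e - 1}\right\rceil = 1,
\]
so $\ba^{\lceil t(p^e - 1) \rceil} = (f)$. The element $f$ itself lies in this ideal, and the map $f^{1/p^e} F^e : R \to R^{1/p^e}$ splits by hypothesis. Thus a single splitting is provided, and Proposition \ref{OneSplittingImpliesAllSplitting} upgrades this to the sharp $F$-purity of $(R, f^{1/(p^e-1)})$.

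There is essentially no obstacle; the only thing to verify is that we are in the setting to apply Proposition \ref{OneSplittingImpliesAllSplitting}, which requires $\ba \cap R^\circ \neq \emptyset$. If $f$ were a zerodivisor on some minimal prime, the hypothesis $\ba \cap R^\circ \neq \emptyset$ could fail, so strictly speaking one should either assume $f \in R^{\circ}$ or verify that the existence of a splitting of $f^{1/p^e} F^e$ forces $f \in R^\circ$ (indeed, since $R$ is reduced and the splitting yields $R \hookrightarrow R^{1/p^e}$ composed with multiplication by $f^{1/p^e}$ being a split injection of $R$-modules, $f$ cannot lie in any minimal prime, as otherwise $f^{1/p^e}$ kills the corresponding residue of the image). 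With this observation the hypothesis of the proposition is verified, and the corollary is immediate.
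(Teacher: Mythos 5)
Your proof is correct and is exactly the intended argument: the paper gives no explicit proof for this corollary because it is an immediate specialization of Proposition \ref{OneSplittingImpliesAllSplitting} with $\ba = (f)$, $t = 1/(p^e-1)$, and the same exponent $e$, where $\lceil t(p^e-1)\rceil = 1$ makes the hypothesis collapse to the given splitting. Your extra remark verifying $f \in R^\circ$ from the splitting (so that the standing hypothesis $\ba \cap R^\circ \neq \emptyset$ holds) is a correct and worthwhile observation, even though the paper treats this as part of the section's standing assumptions.
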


We now show that if we are working with principal ideals, then $F$-purity and \sfpty{ }agree much of the time.

\begin{proposition}
\label{PropositionInfiniteSplittingImpliesAllSplitting}
If $(R, f^t)$ is \sfp, then $(R, f^t)$ is $F$-pure (and in fact the map for determining $F$-purity splits at every $e \geq 0$).
\end{proposition}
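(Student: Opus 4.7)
The plan is to leverage Proposition \ref{OneSplittingImpliesAllSplitting} (specifically, the construction in its proof) to reduce the problem to a single sharp splitting at some exponent $e_0 > 0$, and then, at any target exponent $e \geq 0$, to factor a sufficiently high iterate of that sharp splitting through the map appearing in the ordinary $F$-purity condition at level $e$. The crucial observation is that the sharp definition uses $\lceil t(p^{e_0}-1) \rceil$, which after iteration produces exponents that dominate $t(p^d - 1)$; this gives enough slack to absorb the weaker floor bound $\lfloor t(p^e - 1) \rfloor$ required by ordinary $F$-purity.

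First I would fix $e_0$ from the infinite set of sharp splitting exponents, with associated element $f_{e_0} \in \ba^{\lceil t(p^{e_0}-1)\rceil}$. The proof of Proposition \ref{OneSplittingImpliesAllSplitting} shows that for every positive integer $n$, the map $G_n^{1/p^{ne_0}} F^{ne_0}$ splits, where $G_n = f_{e_0}^{1+p^{e_0}+\cdots+p^{(n-1)e_0}}$ is a multiple of $f^{m(n)}$ with $m(n) := (1+p^{e_0}+\cdots+p^{(n-1)e_0})\lceil t(p^{e_0}-1)\rceil$, and the same proof yields $m(n) \geq t(p^{ne_0}-1)$.

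Next, given arbitrary $e \geq 0$, I would pick $n$ with $d := ne_0 \geq e$ and set $h := f^{\lfloor t(p^e-1)\rfloor} \in \ba^{\lfloor t(p^e-1)\rfloor}$. The key arithmetic estimate
\[
p^{d-e}\lfloor t(p^e-1)\rfloor \;\leq\; t(p^d - p^{d-e}) \;\leq\; t(p^d - 1) \;\leq\; m(n)
\]
shows that $g_2 := G_n / h^{p^{d-e}}$ is a genuine element of $R$, differing from $G_n$ by a nonnegative power of $f$, with $h^{p^{d-e}} g_2 = G_n$ by construction. This identity yields the $R$-linear factorization
\[
R \xrightarrow{\;h^{1/p^e} F^e\;} R^{1/p^e} \xrightarrow{\;s\,\mapsto\, g_2^{1/p^d} s\;} R^{1/p^d},
\]
whose composition equals $G_n^{1/p^d} F^d$. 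Since that composition splits, composing any splitting with the second arrow produces a splitting of $h^{1/p^e} F^e$, which is exactly the $F$-purity splitting at $e$, for every $e \geq 0$.

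The principal obstacle is the arithmetic bookkeeping of $\lceil \cdot \rceil$ versus $\lfloor \cdot \rfloor$ in the displayed inequality; the sharp definition provides precisely the slack $t(p^{d-e}-1)$ needed to accommodate the floor bound. Once $g_2$ is produced, the factorization argument is formal, using reducedness of $R$ to identify $R^{1/p^e}$ as a subring of $R^{1/p^d}$.
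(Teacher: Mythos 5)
Your proof is correct and follows essentially the same strategy as the paper's: both reduce the $F$-purity splitting at a target exponent to a sharp splitting at a larger exponent via the arithmetic inequality $p^{d-e}\lfloor t(p^e-1)\rfloor \leq \lceil t(p^d-1)\rceil$, and both conclude by the formal observation that if a composition through the $F$-purity map splits, so does the $F$-purity map itself. The only cosmetic difference is that you route through Proposition \ref{OneSplittingImpliesAllSplitting} to manufacture sharp splittings at all multiples of a single $e_0$, whereas the paper simply chooses a sharp splitting exponent $e > d$ directly from the infinite supply guaranteed by the definition.
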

\begin{proof}
Choose $d > 0$ and consider the map
\[
(f^{\lfloor t(p^d-1) \rfloor})^{1 \over p^d} F^d : R \rightarrow R^{1\over p^d}
\]
which sends $1$ to $(f^{\lfloor t(p^d-1) \rfloor})^{1 \over p^d}$.  Choose $e > d$ such that
\[
(f^{\lceil t(p^e-1) \rceil})^{1 \over p^e} F^e : R \rightarrow R^{1\over p^e}
\]
splits.
Since $R$ itself is $F$-split, we can compose the map $(f^{\lfloor t(p^d-1) \rfloor})^{1 \over p^d} F^d$ with $F^{e - d}$.
This gives us the map $R \rightarrow R^{1 \over p^e}$ that sends $1$ to $(f^{\lfloor t(p^d-1) \rfloor})^{p^{e-d} \over p^e} = (f^{p^{e-d}\lfloor t(p^d-1) \rfloor})^{1 \over p^e}$.
Thus it is sufficient to show that $p^{e-d}\lfloor t(p^d-1) \rfloor \leq \lceil t(p^e-1) \rceil$.  Of course,
\[
p^{e-d}\lfloor t(p^d-1) \rfloor \leq p^{e-d}(t(p^d - 1)) = t(p^e - p^{e-d}) \leq t(p^e - 1) \leq \lceil t(p^e-1) \rceil
\]
as desired.
\end{proof}

\begin{corollary}
\label{SharplyFPureImpliesFPureElement}
If there is an $f \in R$ such that the map $f^{1 \over p^e} F^e : R \rightarrow R^{1\over p^e}$ splits, then $(R, f^{1 \over p^e - 1})$ is $F$-pure.
\end{corollary}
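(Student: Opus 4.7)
The plan is to obtain this corollary as an immediate consequence of the two preceding results. Starting from the hypothesis that $f^{1/p^e} F^e \colon R \to R^{1/p^e}$ splits, my first step is to apply Corollary \ref{SharplyFPureImpliesSharplyFPureElement} directly; this yields that the pair $(R, f^{1/(p^e-1)})$ is sharply $F$-pure.

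Next, since the ideal in question, namely $(f)$, is principal, I am in a position to invoke Proposition \ref{PropositionInfiniteSplittingImpliesAllSplitting}. Taking $t = 1/(p^e - 1)$ in that proposition converts the sharp $F$-purity obtained in the previous step into ordinary $F$-purity of the pair $(R, f^{1/(p^e-1)})$, which is exactly the conclusion we want. The proof should therefore be a single short sentence invoking these two results in sequence.

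I do not anticipate any obstacle. The only thing worth double-checking is that the hypotheses line up: Corollary \ref{SharplyFPureImpliesSharplyFPureElement} requires only the existence of a split map of the prescribed form, which is given; and Proposition \ref{PropositionInfiniteSplittingImpliesAllSplitting} requires a principal-ideal pair that is sharply $F$-pure, which is precisely what the first application produces. No additional $F$-finiteness, reducedness, or numerical hypothesis beyond those already standing in the section needs to be verified.
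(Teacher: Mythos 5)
Your proof is correct and is precisely the implicit argument the paper intends: Corollary \ref{SharplyFPureImpliesFPureElement} is stated immediately after Proposition \ref{PropositionInfiniteSplittingImpliesAllSplitting}, and chaining Corollary \ref{SharplyFPureImpliesSharplyFPureElement} (which gives sharp $F$-purity of $(R, f^{1/(p^e-1)})$) with Proposition \ref{PropositionInfiniteSplittingImpliesAllSplitting} (which upgrades sharp $F$-purity to $F$-purity in the principal case) is exactly the intended one-line deduction.
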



\begin{corollary}
The pair $(R, \ba^t)$ is \sfp{ }if and only if there exists some $e > 0$ and $f \in \ba^{\lceil t(p^e-1) \rceil}$ such that $(R, f^{1 \over p^e - 1})$ is $F$-pure (equivalently, $(R, f^{1 \over p^e - 1})$ is \sfp).
\end{corollary}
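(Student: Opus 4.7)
The plan is to chain together the propositions and corollaries established so far in this section. For the forward implication, suppose $(R, \ba^t)$ is \sfp. By definition there exist some $e > 0$ and $f \in \ba^{\lceil t(p^e-1) \rceil}$ with $f^{1/p^e} F^e$ splitting; then Corollary \ref{SharplyFPureImpliesFPureElement} shows that $(R, f^{1/(p^e-1)})$ is $F$-pure and Corollary \ref{SharplyFPureImpliesSharplyFPureElement} shows that it is actually \sfp. So this direction is essentially a repackaging of those two corollaries.

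For the reverse implication, I would first reduce the (ostensibly weaker) $F$-pure hypothesis to the \sfp hypothesis. If $(R, f^{1/(p^e-1)})$ is $F$-pure, then since the parameter $t' := 1/(p^e-1)$ satisfies $(p^e-1) t' = 1$, which is an integer, Remark \ref{RemarkFPureImpliesSharplyFPure} promotes $F$-purity to \sfpty. Hence in either hypothesis I may assume $(R, f^{1/(p^e-1)})$ is \sfp, and unpack the definition to produce some $d > 0$ together with $g \in (f)^m$ --- where $m = \lceil (p^d-1)/(p^e-1) \rceil$ --- such that $g^{1/p^d} F^d$ splits.

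Writing $g = f^m h$ for some $h \in R$ and using $f \in \ba^{\lceil t(p^e-1) \rceil}$ gives $g \in \ba^{m \lceil t(p^e-1) \rceil}$. The one real step is the exponent inequality $m \lceil t(p^e-1) \rceil \geq \lceil t(p^d-1) \rceil$, which I would handle by dropping both ceilings on the left via $\lceil x \rceil \geq x$ to obtain $m \lceil t(p^e-1) \rceil \geq \tfrac{p^d-1}{p^e-1} \cdot t(p^e-1) = t(p^d-1)$, and then observing that the left-hand side is an integer. This places $g$ in $\ba^{\lceil t(p^d-1) \rceil}$, whereupon Proposition \ref{OneSplittingImpliesAllSplitting} converts the single splitting of $g^{1/p^d}F^d$ into \sfpty of $(R, \ba^t)$. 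The only mild obstacle is this ceiling bookkeeping, but it is formally identical to the estimate already carried out inside Proposition \ref{OneSplittingImpliesAllSplitting}, so no new technical difficulty appears.
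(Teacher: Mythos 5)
Your forward direction is identical to the paper's (chain Corollaries \ref{SharplyFPureImpliesSharplyFPureElement} and \ref{SharplyFPureImpliesFPureElement}). Your reverse direction is correct but executed a bit differently: after invoking Remark \ref{RemarkFPureImpliesSharplyFPure} to upgrade $F$-purity of $(R, f^{1/(p^e-1)})$ to sharp $F$-purity (same first move as the paper), you unpack the definition of sharp $F$-purity at an arbitrary level $d$, producing $g\in (f)^m$ with $m=\lceil (p^d-1)/(p^e-1)\rceil$, and then run a ceiling estimate to land $g$ in $\ba^{\lceil t(p^d-1)\rceil}$ before applying Proposition \ref{OneSplittingImpliesAllSplitting}. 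The paper instead applies Proposition \ref{PropositionInfiniteSplittingImpliesAllSplitting} at the specific level $d=e$, where the floor $\lfloor \tfrac{1}{p^e-1}(p^e-1)\rfloor$ collapses to $1$; this hands back the splitting $f^{1/p^e}F^e$ on the nose, and since $f\in\ba^{\lceil t(p^e-1)\rceil}$ is given, the exponent arithmetic vanishes entirely. So the paper's route is tighter (no new estimate to verify), while yours is slightly more self-contained in that it only needs Proposition \ref{OneSplittingImpliesAllSplitting} and the raw definition. Both are valid; the estimate you carry out is correct and is, as you note, the same computation that already appears inside Proposition \ref{OneSplittingImpliesAllSplitting}.
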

\begin{proof}
If $(R, \ba^t)$ is \sfp, then it follows by Corollary \ref{SharplyFPureImpliesSharplyFPureElement} that there exists some $e$ and $f$ as above such that $(R, f^{1 \over p^e - 1})$ is \sfp, and so it follows, by Proposition \ref{PropositionInfiniteSplittingImpliesAllSplitting}, that $(R, f^{1 \over p^e - 1})$ is $F$-pure.

Conversely, if there exists an $e$ and $f$ as above such that $(R, f^{1 \over p^e - 1})$ is $F$-pure, then the pair is also \sfp{ }by Remark \ref{RemarkFPureImpliesSharplyFPure}.  Furthermore, by Proposition \ref{PropositionInfiniteSplittingImpliesAllSplitting}, the map
\[
(f^{\lfloor {1 \over p^e - 1}(p^e - 1) \rfloor})^{1 \over p^e} F^e : R \rightarrow R^{1 \over p^e}
\]
splits.  But $f^{\lfloor {1 \over p^e - 1}(p^e - 1) \rfloor} = f \in \ba^{\lceil t(p^e-1) \rceil}$ by assumption, and so the corollary is proven.
\end{proof}

The previous results suggest the following simple question.

\begin{question} If $(R, \ba^t)$ is \sfp, then is $(R, \ba^t)$ $F$-pure? \end{question}

\begin{remark}
In Section \ref{SectionApplicationToFPureThresholds}, we show that if $(R, \ba^t)$ is sharply $F$-pure, then $(R, \ba^{t - \epsilon})$ is strongly $F$-pure (and thus $F$-pure) for every $\epsilon$ satisfying $t \geq \epsilon > 0$.
\end{remark}

\begin{definition}
\label{DefinitionSharpFrobeniusClosure}
Given an ideal $I \subseteq R$, we define the \emph{$\ba^t$-\sfc{ }of $I$}, denoted $I^{F^{\sharp} \ba^t}$, as follows.  The ideal $I^{F^{\sharp} \ba^t}$ is the set of all elements $z$ such that $\ba^{\lceil t (p^e - 1) \rceil} z^{p^e} \subseteq I^{[p^e]}$ for all $e \gg 0$.
\end{definition}

\begin{remark}
Note that if $(R, \ba^t)$ is \sfp, then $I^{F^{\sharp} \ba^t} = I$ for all ideals $I$.  The argument is essentially the same as the argument that a strongly $F$-regular ring is weakly $F$-regular; see \cite[Proposition 3.1]{HochsterHunekeTightClosureAndStrongFRegularity}.
\end{remark}

\begin{remark}
In Definition \ref{DefinitionSharpFrobeniusClosure}, it might seem natural to replace the condition ``for all $e \gg 0$'' with the condition ``for infinitely many $e > 0$''.  We will be working in the case of an \sfp{ }pair.  Therefore, if for infinitely many $e > 0$, we have the containment $\ba^{\lceil t (p^e - 1) \rceil} z^{p^e} \subseteq I^{[p^e]}$, it is not clear whether we still have such a containment at an $e > 0$ where the map used to define \sfpty{ }splits.  Furthermore, the condition that $\ba^{\lceil t (p^e - 1) \rceil} z^{p^e} \subseteq I^{[p^e]}$ for all $e \gg 0$ arises naturally in Lemma \ref{EasyComputationOfSharpFrobeniusClosure}.
\end{remark}

The following lemma can be used to check containment in common situations.

\begin{lemma}
\label{EasyComputationOfSharpFrobeniusClosure}
In the setup of the previous definition, suppose that $\ba = (f)$ and there exists some $e > 0$ such that $\ba^{\lceil t (p^e - 1) \rceil } z^{p^e} \subseteq I^{[p^e]}$ and so that $t(p^e - 1)$ is an integer, then $z \in I^{F^{\sharp} \ba^t}$
\end{lemma}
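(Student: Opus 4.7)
The plan is to bootstrap the single-exponent hypothesis up through all sufficiently large Frobenius powers, using only the principality of $\ba$ and the integrality of $t(p^e-1)$.

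First I would observe that because $t(p^e-1) \in \mathbb{Z}$, the ceiling is redundant and the hypothesis reads simply as $f^{t(p^e-1)} z^{p^e} \in I^{[p^e]}$. I would then prove by induction on $n \geq 1$ that
\[
f^{t(p^{ne}-1)} z^{p^{ne}} \in I^{[p^{ne}]}.
\]
The base case $n=1$ is the hypothesis. For the inductive step, raise the $n$-th stage containment to the $p^e$-th power to obtain $f^{p^e t(p^{ne}-1)} z^{p^{(n+1)e}} \in I^{[p^{(n+1)e}]}$, and then multiply by the element $f^{t(p^e-1)} \in R$; the resulting exponent on $f$ is $p^e t(p^{ne}-1) + t(p^e-1) = t(p^{(n+1)e}-1)$, as needed. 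Note $t(p^{ne}-1) = t(p^e-1)(1+p^e+\cdots+p^{(n-1)e})$ is automatically an integer, so there are no ceiling issues in this induction.

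Next I would handle the intermediate degrees. For any $d \geq e$, write $d = ne + r$ with $n \geq 1$ and $0 \leq r < e$. Raising the $n$-th stage containment to the $p^r$-th power gives
\[
f^{p^r t(p^{ne}-1)} z^{p^d} = f^{t(p^d - p^r)} z^{p^d} \in I^{[p^d]},
\]
and $t(p^d - p^r) = p^r t(p^e-1)(1+\cdots+p^{(n-1)e})$ is again an integer. Since $\lceil t(p^d-1)\rceil \geq t(p^d-1) \geq t(p^d - p^r)$ and both $\lceil t(p^d-1)\rceil$ and $t(p^d-p^r)$ are integers, the difference $\lceil t(p^d-1)\rceil - t(p^d-p^r)$ is a nonnegative integer, and multiplying by $f$ to that power yields $f^{\lceil t(p^d-1)\rceil} z^{p^d} \in I^{[p^d]}$.

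This holds for every $d \geq e$, so in particular for all $d \gg 0$, which is exactly the condition $z \in I^{F^{\sharp}\ba^t}$ from Definition \ref{DefinitionSharpFrobeniusClosure}. The only real obstacle is the bookkeeping with the ceiling function, and integrality of $t(p^e-1)$ (combined with the identity $(1+p^e+\cdots+p^{(n-1)e})(p^e-1) = p^{ne}-1$) is precisely what makes all the relevant exponents integers so that such bookkeeping never requires one to increase an exponent by a non-integer amount.
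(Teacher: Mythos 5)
Your proof is correct, and it relies on the same core mechanism as the paper's: raise the hypothesis through Frobenius powers and use the integrality of $t(p^e-1)$ to control the ceilings so that the exponent-comparison differences are nonnegative integers. The difference is purely organizational. The paper raises the hypothesis directly to the $p^d$-th power for arbitrary $d>0$, obtaining $f^{p^d t(p^e-1)} z^{p^{e+d}} \in I^{[p^{e+d}]}$, and then checks in one line that $\lceil t(p^{d+e}-1)\rceil \geq p^d t(p^e-1)$. Your induction along multiples of $e$ followed by a separate treatment of the remainder $r$ reconstructs exactly this, since raising to $p^e$ iteratively $n$ times and then to $p^r$ is the same as raising once to $p^{ne+r}$; the multiplication by $f^{t(p^e-1)}$ at each inductive stage is unnecessary bookkeeping that the paper avoids. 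So the argument is sound, just a longer path to the same computation.
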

\begin{proof}
Suppose that $f^{\lceil t (p^e - 1) \rceil } z^{p^e} \subseteq I^{[p^e]}$.  Then, for any integer $d > 0$ we have
\[
f^{p^d \lceil t (p^e - 1) \rceil } z^{p^{e+d}} \subseteq I^{[p^{e+d}]}.
\]
Therefore, it suffices to show that $\lceil t (p^{d + e} - 1) \rceil \geq p^d \lceil t (p^e - 1) \rceil$.  Of course $t (p^e - 1) =  \lceil t (p^e - 1) \rceil$ by assumption so that $p^d \lceil t (p^e - 1) \rceil = p^d t (p^e - 1) = p^{d+e} t - t p^d$.  Now, note that $\lceil t (p^{d + e} - 1) \rceil \geq t (p^{d + e} - 1) \geq p^{d+e} t - t p^d$.
\end{proof}

Our last goal of the section is to prove that sharply $F$-pure pairs have radical test ideals.

\begin{proposition}
Suppose $c$ is an element of $R$ such that some power of $c$ is contained in $\tau(\ba^t)$.
Then for all ideals $I$, we have $c I^{*\ba^t} \subseteq I^{F^{\sharp} \ba^t}$.
\end{proposition}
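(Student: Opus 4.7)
The plan is to use Theorem \ref{TestIdealsAreSharpTestMultipliers} to translate the hypothesis ``some power of $c$ lies in $\tau(\ba^t)$'' into the statement that $c^n$ is an $\ba^t$-sharp test multiplier for some $n \geq 1$, and then multiply by a sufficiently high Frobenius power of $c$ to convert this into the defining containment for $I^{F^{\sharp} \ba^t}$.

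More concretely, fix $z \in I^{*\ba^t}$ and let $n$ be a positive integer with $c^n \in \tau(\ba^t)$. By Theorem \ref{TestIdealsAreSharpTestMultipliers}, $c^n$ is a sharp test multiplier, so
\[
c^n \, \ba^{\lceil t(p^e - 1) \rceil} z^{p^e} \subseteq I^{[p^e]} \qquad \text{for all } e \geq 0.
\]
I want to show that $cz \in I^{F^{\sharp} \ba^t}$, i.e.\ that $\ba^{\lceil t(p^e - 1) \rceil} (cz)^{p^e} = c^{p^e} \ba^{\lceil t(p^e - 1) \rceil} z^{p^e} \subseteq I^{[p^e]}$ for all $e \gg 0$. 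Choose $e_0$ large enough that $p^{e_0} \geq n$; then for every $e \geq e_0$ one has $c^{p^e} = c^{n} \cdot c^{p^e - n}$, so
\[
c^{p^e} \, \ba^{\lceil t(p^e - 1) \rceil} z^{p^e} = c^{p^e - n}\bigl(c^n \, \ba^{\lceil t(p^e - 1) \rceil} z^{p^e}\bigr) \subseteq c^{p^e - n} I^{[p^e]} \subseteq I^{[p^e]},
\]
which gives exactly the containment required by Definition \ref{DefinitionSharpFrobeniusClosure}. Hence $cz \in I^{F^{\sharp}\ba^t}$.

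There is no real obstacle here; the statement is essentially a bookkeeping consequence of Theorem \ref{TestIdealsAreSharpTestMultipliers} combined with the fact that $p^e$ eventually dominates any fixed exponent $n$. The only point worth double-checking is that the defining condition of $I^{F^{\sharp} \ba^t}$ is a ``for all $e \gg 0$'' condition rather than ``for all $e \geq 0$'', which is precisely why passing from $c^n$ to $c^{p^e}$ (and thus losing control for the finitely many small $e$ with $p^e < n$) is harmless.
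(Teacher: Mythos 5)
Your proof is correct and takes essentially the same route as the paper: the paper observes that $c^{p^e} \in \tau(\ba^t)$ for all $e$ with $p^e \geq n$ and applies the sharp-test-multiplier property to $c^{p^e}$ directly, while you fix $c^n$ as the sharp test multiplier and then factor $c^{p^e} = c^{p^e - n} c^n$, which is the same calculation written slightly more explicitly.
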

\begin{proof}
Choose $z \in I^{*\ba^t}$.  Consider all $e \geq e_0$ such that $c^{p^e}$ is in $\tau(\ba^t)$.
But then we know that $\ba^{\lceil t (p^e - 1) \rceil } (cz)^{p^e} = c^{p^e} \ba^{\lceil t (p^e - 1) \rceil } z^{p^e} \subseteq I^{[p^e]}$ for all $e \geq e_0$.   This implies that $cz \in I^{F^{\sharp} \ba^t}$.
\end{proof}

\begin{corollary}
\label{SharpFPureImpliesRadicalTestIdeal}
Suppose that $(R, \ba^t)$ is sharply $F$-pure.  Then $\tau(\ba^t)$ is a radical ideal.
\end{corollary}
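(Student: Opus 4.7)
The plan is to combine the proposition immediately preceding the corollary with the remark stating that sharp $F$-purity forces $I^{F^{\sharp} \ba^t} = I$ for every ideal $I$. Concretely, I would start by picking an element $c \in R$ with some power $c^n \in \tau(\ba^t)$, and my goal is to show $c \in \tau(\ba^t)$.

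Using the defining characterization $\tau(\ba^t) = \bigcap_J (J : J^{*\ba^t})$, it suffices to show $c J^{*\ba^t} \subseteq J$ for every ideal $J$ of $R$. The previous proposition, applied to $c$ (since $c^n \in \tau(\ba^t)$), gives us
\[
c J^{*\ba^t} \subseteq J^{F^{\sharp} \ba^t}.
\]
Now invoke sharp $F$-purity of $(R, \ba^t)$: by the remark just after Definition \ref{DefinitionSharpFrobeniusClosure}, the sharp Frobenius closure is trivial, so $J^{F^{\sharp} \ba^t} = J$. Chaining these two containments yields $c J^{*\ba^t} \subseteq J$, so $c \in (J : J^{*\ba^t})$. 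As $J$ was arbitrary, $c \in \tau(\ba^t)$, proving radicality.

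There is no real obstacle here: everything needed is already packaged in the preceding results. The only subtlety is making sure the remark about $I^{F^{\sharp}\ba^t} = I$ under sharp $F$-purity is truly applicable, which it is since sharp $F$-purity gives an $e$ with $f_e^{1/p^e} F^e$ split, and any containment $\ba^{\lceil t(p^e-1)\rceil} z^{p^e} \subseteq I^{[p^e]}$ at such an $e$ forces $z \in I$ by applying a splitting. Thus the corollary follows in two lines from the machinery already established.
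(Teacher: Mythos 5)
Your proof is correct and follows the paper's own argument exactly: the paper also chains the preceding proposition ($c\,I^{*\ba^t} \subseteq I^{F^{\sharp}\ba^t}$ when some power of $c$ lies in $\tau(\ba^t)$) with the remark that sharp $F$-purity forces $I^{F^{\sharp}\ba^t} = I$ for all $I$, concluding $c \in \tau(\ba^t)$.
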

\begin{proof}
Suppose that $c^n \in \tau(\ba^t)$ for some $n > 0$.  Then we know that for any ideal $I$, $c I^{*\ba^t} \subseteq I^{F^{\sharp} \ba^t} = I$ where the final equality comes because $(R, \ba^t)$ is sharply $F$-pure.  Thus $c \in \tau(\ba^t)$ as desired.
\end{proof}

\begin{corollary}
Suppose that $(R, \ba^t)$ is $F$-pure and that $(p^e - 1)t$ is an integer for some (equivalently infinitely many) $e > 0$.  Then $\tau(\ba^t)$ is a radical ideal.
\end{corollary}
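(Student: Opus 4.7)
The plan is to reduce this corollary directly to Corollary \ref{SharpFPureImpliesRadicalTestIdeal} via Remark \ref{RemarkFPureImpliesSharplyFPure}. The two hypotheses line up exactly with the setup of that remark, so the work is essentially bookkeeping.

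First I would dispatch the parenthetical ``equivalently infinitely many'' claim: if $(p^e-1)t$ is an integer for some $e>0$, then for every positive integer $n$,
\[
(p^{ne}-1)t = (p^e-1)(1 + p^e + p^{2e} + \cdots + p^{(n-1)e})\, t
\]
is also an integer, so the same holds for infinitely many exponents. Equivalently (as stated in Remark \ref{RemarkFPureImpliesSharplyFPure}), $t$ is a rational number whose denominator, in lowest terms, is coprime to $p$.

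Next I would apply Remark \ref{RemarkFPureImpliesSharplyFPure} verbatim: since $(R,\ba^t)$ is assumed $F$-pure and $(p^e-1)t \in \mathbb{Z}$ for infinitely many $e$, the ceiling $\lceil t(p^e-1)\rceil$ agrees with the floor $\lfloor t(p^e-1)\rfloor$ for those infinitely many $e$. Thus an element $f_e \in \ba^{\lfloor t(p^e-1)\rfloor}$ witnessing the $F$-purity splitting at level $e$ automatically lies in $\ba^{\lceil t(p^e-1)\rceil}$, and the splitting $f_e^{1/p^e} F^e$ is a witness to \sfpty. Hence $(R,\ba^t)$ is \sfp.

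Finally I would invoke Corollary \ref{SharpFPureImpliesRadicalTestIdeal} to conclude that $\tau(\ba^t)$ is radical. There is no real obstacle here; the point is simply that the integrality hypothesis on $(p^e-1)t$ is exactly what is needed to pass from the weaker ``$F$-pure'' condition (formulated with floors) to the ``\sfp'' condition (formulated with ceilings), at which point the preceding corollary applies directly.
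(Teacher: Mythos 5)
The paper gives no proof of this corollary because it is immediate from Remark \ref{RemarkFPureImpliesSharplyFPure} together with Corollary \ref{SharpFPureImpliesRadicalTestIdeal}, which is exactly the route you take. Your argument is correct and matches the intended one; the only detail worth noting (which you handle implicitly) is that the definition of $F$-purity asks for splittings at all $e \gg 0$, so one needs the set of $e$ with $(p^e-1)t \in \mathbb{Z}$ to be infinite (which you establish) in order to find such an $e$ beyond any given threshold.
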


If instead of \sfpty{ }one considers the standard definition ($F$-purity), then Yoshida and Musta{\c{t}}{\u{a}} have constructed examples of pairs $(R, f^t)$ that are $F$-pure, but which have non-radical test-ideals; see \cite[Example 4.5]{MustataYoshidaTestIdealVsMultiplierIdeals}.

\section{On a generalization of a result of Vassilev's}

The first goal of this section is to show that a Fedder-type criterion extends to the context of \sfp{ }pairs.  As an application we show that the main result of \cite{VassilevTestIdeals}, also extends to the context of sharply $F$-pure pairs.  Her result, and its generalization below, can be thought of as closely related to certain subadjunction-type results in characteristic zero such as \cite{AmbroSeminormalLocus}, \cite{KawamataSubadjunction2} and \cite[Theorem 5.5]{SchwedeEasyCharacterization}.

\begin{theorem} \cite[Proposition 1.7]{FedderFPureRat}, \cite[Proposition 2.6]{HaraWatanabeFRegFPure}, \cite[Lemmas 3.4, 3.9]{TakagiInversion}
\label{SharpFeddersCriterion}
Suppose $(S, \bm)$ is an $F$-finite regular local ring, $I \subseteq S$ a radical ideal, $R = S/I$, $\ba'$ an ideal of $S$ containing $I$, with image in $R$ denoted by $\ba$ such that $\ba \cap R^{\circ} \neq \emptyset$, and $t > 0$ is a real number.  Then the following are equivalent.
\begin{itemize}
\item[(i)] $(R, \ba^t)$ is \sfp,
\item[(ii)]  For infinitely many $q = p^e$, there exists $d \in \ba^{\lceil t(q-1) \rceil}$ such that $d^{1 \over p^e} F^e : E_R \rightarrow E_R \tensor_R R^{1 \over q}$ is injective,
\item[(iii)]  $\ba'^{\lceil t (q - 1) \rceil} (I^{[q]} : I) \nsubseteq \bm^{[q]}$ for infinitely many $q = p^e$.
\end{itemize}
\end{theorem}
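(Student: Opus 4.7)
I would prove this along the lines of the classical Fedder criterion, adapted to the sharp setting, splitting the argument into (i)$\Leftrightarrow$(ii), handled by Matlis duality, and (ii)$\Leftrightarrow$(iii), handled by a Fedder-style lift to the regular ring $S$. Throughout, I would fix a single $q = p^e$ at a time, and use Proposition \ref{OneSplittingImpliesAllSplitting} (and its socle-level analogue) to harmonize the ``infinitely many $q$'' quantifier: existence at one $q = p^e$ automatically yields existence at every $q' = p^{ne}$ via composition, so it suffices to track a single $q$ on each side.

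For (i)$\Leftrightarrow$(ii), fix $q = p^e$ and $d \in \ba^{\lceil t(q-1)\rceil}$. Since $R$ is $F$-finite, $R^{1/q}$ is a finitely generated $R$-module, so Matlis duality is exact on the relevant modules. The map $d^{1/p^e} F^e : R \to R^{1/q}$ splits as a map of $R$-modules if and only if its Matlis dual is surjective, which is equivalent to injectivity of the induced map $d^{1/p^e} F^e : E_R \to E_R \otimes_R R^{1/q}$ (the dual of the evaluation map $\mathrm{Hom}_R(R^{1/q},R) \to R$). Running over $e$ gives the equivalence of the ``infinitely many $e$'' conditions.

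For (ii)$\Leftrightarrow$(iii), I would lift everything to $S$. Using $E_R \cong \mathrm{Ann}_{E_S}(I) \subseteq E_S = H^{\dim S}_{\bm}(S)$, and choosing a lift $d' \in \ba'^{\lceil t(q-1)\rceil}$ of $d \in \ba^{\lceil t(q-1)\rceil}$, the map in (ii) is identified with the restriction of $d'^{1/p^e} F^e : E_S \to E_S \otimes_S S^{1/q}$ to the submodule $E_R \subseteq E_S$. The standard Fedder socle computation (as in \cite{FedderFPureRat}, transplanted via the formulas in \cite{HaraWatanabeFRegFPure} and \cite{TakagiInversion}) then shows that this restriction is injective if and only if $d' \cdot (I^{[q]} : I) \not\subseteq \bm^{[q]}$. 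Since lifts $d'$ range over $\ba'^{\lceil t(q-1)\rceil}$ (modulo $I$), and since $I \subseteq \bm^{[q]}$ is not automatic but $I \cdot (I^{[q]} : I) \subseteq I^{[q]} \subseteq \bm^{[q]}$ holds trivially, the existence of some $d$ giving injectivity is equivalent to $\ba'^{\lceil t(q-1)\rceil}(I^{[q]}:I) \not\subseteq \bm^{[q]}$.

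The main obstacle will be bookkeeping of the exponents $\lceil t(q-1)\rceil$ (rather than $\lceil tq\rceil$, which is what appears in the strongly $F$-pure and $\ba^t$-tight closure literature), and ensuring that the ``infinitely many'' quantifier passes cleanly through both equivalences. As noted, the reduction to a single $q$ via Proposition \ref{OneSplittingImpliesAllSplitting} should handle the latter; the former is purely a matter of verifying that the classical socle calculation in \cite{FedderFPureRat} is insensitive to which integer exponent of $\ba'$ one uses, so long as one is consistent on both sides of the equivalence.
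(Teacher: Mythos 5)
Your proposal follows the same decomposition as the paper: (i)$\Leftrightarrow$(ii) by Matlis duality, and (ii)$\Leftrightarrow$(iii) by the Fedder/Takagi socle computation lifted to $S$. Two small remarks. First, the invocation of Proposition \ref{OneSplittingImpliesAllSplitting} to ``harmonize the infinitely-many quantifier'' is unnecessary: both the Matlis duality step and the socle computation are carried out at a single fixed $q = p^e$, so the three conditions are equivalent $q$-by-$q$ and the ``infinitely many $q$'' quantifier simply passes through without further argument. (Indeed, the paper never mentions that proposition in this proof.) Second, you should make the reduction to the complete local case explicit before invoking Matlis duality; the paper does this by observing that surjectivity of $\Hom_R(R^{1/q},R)\to\Hom_R(R,R)$ is unaffected by completion, and then identifies $\Hom_R(R^{1/q},R)$ with the Matlis dual of $E_R\otimes_R R^{1/q}$ via the chain $\Hom_R(R^{1/q},R)\cong\Hom_R(\Hom_R(R,E_R),\Hom_R(R^{1/q},E_R))\cong\Hom_R(E_R\otimes_R R^{1/q},E_R)$. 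Your phrasing gestures at this but does not spell it out. Beyond these points, your sketch of (ii)$\Leftrightarrow$(iii) — lifting $d$ to $d'\in\ba'^{\lceil t(q-1)\rceil}$, identifying $E_R$ with $\mathrm{Ann}_{E_S}(I)$, running the Fedder computation, and noting well-definedness modulo $I$ because $I(I^{[q]}:I)\subseteq\bm^{[q]}$ — is exactly the argument the paper outsources to \cite[Theorem 3.9]{TakagiInversion}.
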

\begin{proof}
\numberwithin{equation}{theorem}
We begin by proving the equivalence of (i) and (ii).
Note that for any $d \in R$, the map $d^{1 \over q} F^e : R \rightarrow R^{1 \over q}$ splits if and only if the composition
\begin{equation}
\label{compositionSurjective}
\xymatrix{
\Hom_R(R^{1 \over q}, R) \ar[r]^-{d^{1 \over q}} & \Hom_R(R^{1 \over q}, R) \rightarrow \Hom_R(R, R)
}
\end{equation}
is surjective.  This surjection is clearly invariant under completion, so we may assume that $R$ is complete.
Following \cite[Lemma 3.4]{TakagiInversion}, we wish to show that the Matlis dual of $\Hom_R(R^{1 \over q}, R)$ is isomorphic to $E_R \tensor_R R^{1 \over q}$.
But note that, by Matlis duality, $\Hom_R(R^{1 \over q}, R) \cong \Hom_R(\Hom_R(R, E_R), \Hom_R(R^{1 \over q}, E_R))$.  On the other hand, the right side is naturally isomorphic to $\Hom_R(E_R \tensor_R R^{1 \over q}, E_R)$ which is precisely the Matlis dual of $E_R \tensor_R R^{1 \over q}$ as desired.  But then we see, by Matlis duality, that the map \ref{compositionSurjective} is surjective if and only if $d^{1 \over p^e} F^e : E_R \rightarrow E_R \tensor_R R^{1 \over q}$ is injective.

Instead of proving the equivalence of (ii) and (iii) we simply refer to the proof of \cite[Theorem 3.9]{TakagiInversion}.  In that proof, Takagi shows that $d^{1 \over p^e} F^e : E_R \rightarrow E_R \tensor_R R^{1 \over p^e}$ injects if and only if $d (I^{[p^e]} : I) \nsubseteq \bm^{[p^e]}$, which is precisely what we need.
\end{proof}

\begin{theorem}\cite[Theorem 3.1]{VassilevTestIdeals}
Let $S$ be an $F$-finite regular local ring and $I \subseteq R$ a radical ideal.  Suppose that $\ba \subseteq S$ is an ideal containing $I$ and $t > 0$ is a real number.  Let us denote $S/I$ by $R$ and  $\ba / I$ by $\overline{\ba}$.  If $\phi : S \rightarrow R$ is the canonical surjection, then $\ba^{\lceil t(q-1) \rceil} (I^{[q]} : I) \subseteq ((\phi^{-1}(\tau_R(\overline{\ba}^t)))^{[q]} : \phi^{-1}(\tau_R(\overline{\ba}^t)))$ for all $q = p^e > 0$.
\end{theorem}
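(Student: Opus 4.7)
The plan is to translate the required containment into a statement about $R$-linear maps $R^{1/q} \to R$ and then invoke the sharp test multiplier characterization of $\tau_R(\overline{\ba}^t)$. Set $J := \phi^{-1}(\tau_R(\overline{\ba}^t))$ and fix $x \in \ba^{\lceil t(q-1)\rceil}$, $y \in (I^{[q]}:I)$, and $z \in J$; the goal is to show $xyz \in J^{[q]}$. Because $S$ is $F$-finite and regular, $S^{1/q}$ is a free $S$-module and $\Hom_S(S^{1/q},S)$ is a rank-one free $S^{1/q}$-module, so we may fix a generator $\Phi_e$. A standard consequence of freeness is that an element $a \in S$ lies in $J^{[q]}$ if and only if $\Phi_e((sa)^{1/q}) \in J$ for every $s \in S$. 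Applied to $a = xyz$, and using that $sz \in J$ for every $s \in S$, it suffices to show $\Phi_e((xyz')^{1/q}) \in J$ for every $z' \in J$.

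Since $y \in (I^{[q]}:I)$, the $S$-linear map $S^{1/q} \to S$ given by $a \mapsto \Phi_e(y^{1/q} a)$ sends $I^{1/q}$ (the kernel of $S^{1/q} \twoheadrightarrow R^{1/q}$) into $I$, and therefore descends to an $R$-linear map $\phi_y : R^{1/q} \to R$ with $\phi_y((\overline{u})^{1/q}) = \overline{\Phi_e((yu)^{1/q})}$ for any $u \in S$. Reducing modulo $I$, the containment $\Phi_e((xyz')^{1/q}) \in J$ becomes $\phi_y((\overline{xz'})^{1/q}) \in \tau_R(\overline{\ba}^t)$. The problem is therefore reduced to the purely $R$-side claim that for every $\overline{z'} \in \tau_R(\overline{\ba}^t)$ and every $\overline{x} \in \overline{\ba}^{\lceil t(q-1)\rceil}$, we have $\phi_y((\overline{x}\,\overline{z'})^{1/q}) \in \tau_R(\overline{\ba}^t)$.

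To prove this, I would unwind $\tau_R(\overline{\ba}^t) = \bigcap_K (K : K^{*\overline{\ba}^t})$: fix an ideal $K \subseteq R$ and any $w \in K^{*\overline{\ba}^t}$, and check that $w \cdot \phi_y((\overline{x}\,\overline{z'})^{1/q}) \in K$. By $R$-linearity of $\phi_y$, this product equals $\phi_y((\overline{x}\,w^q\,\overline{z'})^{1/q})$. Now Theorem \ref{TestIdealsAreSharpTestMultipliers} identifies $\overline{z'}$ as a sharp test multiplier, so $\overline{z'}\,\overline{\ba}^{\lceil t(q-1)\rceil}\,w^q \subseteq K^{[q]}$; in particular, $\overline{x}\,w^q\,\overline{z'} \in K^{[q]}$. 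Writing this element as $\sum a_i k_i^q$ with $k_i \in K$ and using $R$-linearity once more to pull out the $k_i$'s gives $\phi_y((\overline{x}\,w^q\,\overline{z'})^{1/q}) = \sum k_i\,\phi_y((a_i)^{1/q}) \in K$, as required.

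The main obstacle is organizing the three factors $x$, $y$, $z$ so that the sharp test multiplier property of $\overline{z'}$ (which pairs an element of $\overline{\ba}^{\lceil t(q-1)\rceil}$ with a $q$-th power) meshes with the Frobenius structure of $\phi_y$. The key maneuver is to group $y$ with $\Phi_e$ (producing the descended map $\phi_y$) and then apply $R$-linearity twice: first to absorb $w$ into $w^q$ inside the argument of $\phi_y$, and second to extract the $k_i$'s once the sharp test multiplier property forces $\overline{x}\,w^q\,\overline{z'}$ into $K^{[q]}$.
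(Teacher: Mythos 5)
Your proof is correct, and it takes a genuinely different route from the paper's. The paper argues on the ``ideal side'': it applies the sharp-test-multiplier containment inside $R$, pulls back to $S$, uses the colon hypothesis on $w \in (I^{[q]}:I)$ together with flatness of Frobenius over the regular ring $S$ to rewrite $(J^{[q]} : K^{[q]})$ as $(J:K)^{[q]}$, invokes Vassilev's Lemma 2.1 to commute the infinite intersection with bracket powers, and finishes by identifying $\bigcap_{I \subseteq J}(J : \phi^{-1}(\overline{J}^{*\overline{\ba}^t}))$ with $\phi^{-1}(\tau_R(\overline{\ba}^t))$. You instead work on the ``dual side'' via the Frobenius trace $\Phi_e$, which generates $\Hom_S(S^{1/q},S)$ as a rank-one free $S^{1/q}$-module for the $F$-finite regular local ring $S$. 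The criterion ``$a \in J^{[q]}$ iff $\Phi_e((sa)^{1/q}) \in J$ for all $s$'' replaces the colon/bracket manipulations, the condition $y \in (I^{[q]}:I)$ is used to descend $\Phi_e(y^{1/q}\cdot -)$ to an $R$-linear map $\phi_y : R^{1/q} \to R$, and the two applications of $p^{-e}$-linearity (absorbing $w$ as $w^q$, then extracting the $k_i$'s) substitute for the bracket-power bookkeeping. Both proofs hinge on Theorem \ref{TestIdealsAreSharpTestMultipliers}, i.e. that elements of $\tau_R(\overline{\ba}^t)$ are sharp test multipliers. Your approach avoids Vassilev's Lemma 2.1 entirely and is conceptually closer to the Cartier-operator viewpoint now standard in the subject; the trade-off is that it leans more essentially on the local hypothesis (freeness, not just projectivity, of $S^{1/q}$), whereas the paper's colon-ideal computations only need flatness of Frobenius.
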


\begin{proof}
The proof is a straightforward generalization of Vassilev's proof of the same statement in the case that $\ba = R$.
Given $J \subseteq S$ which contains $I$, let us denote $J/I \subseteq R$ by $\overline{J}$.  Notice that for all $\overline J \subseteq R$, all $q = p^e$, all $c \in \tau_R(\overline{\ba}^t)$ and all $z \in \overline{J}^{*\overline{\ba}^t}$ we have $c \overline{\ba}^{\lceil t (q-1) \rceil} z^q \subseteq \overline{J}^{[q]}$.
In other words, we have
\[
(\tau_R(\overline{\ba}^t)) \overline{\ba}^{\lceil t (q-1) \rceil} ( (\overline{J}^{*\ba^t})^{[q]}) \subseteq \overline{J}^{[q]}.
\]
Pulling this back to $S$,
we obtain
\[
\small
\xymatrix{
\phi^{-1}(\tau_R(\overline{\ba}^t)) \ba^{\lceil t (q-1) \rceil} (\phi^{-1}(\overline{J}^{*\overline{\ba}^t}))^{[q]} \subseteq \phi^{-1}(\tau_R(\overline{\ba}^t)) \phi^{-1}(\overline{\ba}^{\lceil t (q-1) \rceil}) (\phi^{-1}((\overline{J}^{*\overline{\ba}^t})^{[q]})) \subseteq J^{[q]} + I.
}
\]
Choose $w \in (I^{[q]} : I)$ and note that it is sufficient to show that $w \ba^{\lceil t(q-1) \rceil} \subseteq ((\phi^{-1}(\tau_R(\overline{\ba}^t)))^{[q]} : \phi^{-1}(\tau_R(\overline{\ba}^t)))$.  Observe that
\[
w \phi^{-1}(\tau_R(\overline{\ba}^t))  \ba^{\lceil t (q-1) \rceil} (\phi^{-1}(\overline{J}^{*\overline{\ba}^t}))^{[q]} \subseteq w J^{[q]} + I^{[q]} \subseteq J^{[q]}.
\]
Therefore,
\[
w \phi^{-1}(\tau_R(\overline{\ba}^t))  \ba^{\lceil t (q-1) \rceil} \subseteq ( J^{[q]} : (\phi^{-1}(\overline{J}^{*\overline{\ba}^t}))^{[q]} ) = (J : (\phi^{-1}(\overline{J}^{*\overline{\ba}^t})))^{[q]}.
\]
But this implies that $w \phi^{-1}(\tau_R(\overline{\ba}^t))  \ba^{\lceil t (q-1) \rceil} \subseteq \bigcap_{I \subseteq J} (J :(\phi^{-1}(\overline{J}^{*\overline{\ba}^t})))^{[q]}$.  By \cite[Lemma 2.1]{VassilevTestIdeals}, we see that $w \ba^{\lceil t (q-1) \rceil} \phi^{-1}(\tau_R(\overline{\ba}^t)) \subseteq \left(\bigcap_{I \subseteq J} (J : (\phi^{-1}(\overline{J}^{*\overline{\ba}^t})))\right)^{[q]}$.

Thus, we simply have to prove that $\bigcap_{I \subseteq J} (J : (\phi^{-1}(\overline{J}^{*\ba^t}))) = \phi^{-1}(\tau_R(\overline{\ba}^t))$, and we will be done.
Therefore, we consider
\[
v \in \bigcap_{I \subseteq J} (J : (\phi^{-1}(\overline{J}^{*\ba^t}))).
\]
This is equivalent to $v \phi^{-1}(\overline{J}^{*\ba^t}) \subseteq J$ for all $\overline{J} \subseteq R$.  Of course, this is the same as the condition $\phi(v) \overline{J}^{*\ba^t} \subseteq \overline{J}$, or in other words $\phi(v) \in (\overline{J} : \overline{J}^{*\ba^t})$ for all $J \subseteq R$.  This last statement is equivalent to $v \in \phi^{-1}(\tau_R(\overline{\ba}^t))$ as desired.
\end{proof}

\begin{corollary}
\label{GeneralizedVassilevSubadjunction}
Suppose $R$ is a quotient of an $F$-finite regular ring, $\ba \subseteq R$ an ideal such that $\ba \cap R^{\circ} \neq \emptyset$ and $t > 0$ a real number such that $(R, \ba^t)$ is \sfp. Then $R / \tau_R(\ba^t)$ is $F$-pure.
\end{corollary}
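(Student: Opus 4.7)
The plan is to combine the sharp Fedder criterion (Theorem \ref{SharpFeddersCriterion}) with the preceding Vassilev-type containment and then invoke the classical Fedder criterion. Since $R$ is $F$-finite and $F$-purity of a quotient of an $F$-finite regular ring can be checked after localizing at each maximal ideal, I would first reduce to the case where $S$ is an $F$-finite regular local ring with maximal ideal $\bm$ and $R = S/I$ for a radical ideal $I$ (radical because $R$ is reduced under the blanket assumption). Pick any ideal $\ba' \subseteq S$ containing $I$ whose image in $R$ is $\ba$.

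Let $\phi : S \to R$ denote the quotient map and set $J = \phi^{-1}(\tau_R(\ba^t))$. By Corollary \ref{SharpFPureImpliesRadicalTestIdeal}, $\tau_R(\ba^t)$ is a radical ideal, so $J$ is radical, contains $I$, and $S/J \cong R/\tau_R(\ba^t)$. Thus it suffices to prove $S/J$ is $F$-pure, and by the classical Fedder criterion this amounts to showing
\[
(J^{[q]} : J) \nsubseteq \bm^{[q]} \quad \text{for some } q = p^e.
\]

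Now I would invoke the two ingredients in sequence. Since $(R, \ba^t)$ is sharply $F$-pure, Theorem \ref{SharpFeddersCriterion}(iii) yields
\[
\ba'^{\lceil t(q-1) \rceil}(I^{[q]} : I) \nsubseteq \bm^{[q]}
\]
for infinitely many $q = p^e$. The preceding Vassilev-type theorem (applied with the same $\ba'$ and $I$) gives the containment
\[
\ba'^{\lceil t(q-1) \rceil}(I^{[q]} : I) \subseteq (J^{[q]} : J)
\]
for every $q = p^e$. Putting these together, $(J^{[q]} : J) \nsubseteq \bm^{[q]}$ for (infinitely many, hence certainly some) $q$, and Fedder's criterion concludes that $S/J$ is $F$-pure.

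The substantive work has already been done in Theorem \ref{SharpFeddersCriterion} and the preceding generalization of Vassilev's theorem; the only thing to be careful about is the local reduction and the observation that the same $\ba'$ may be used in both ingredients, so the two containments dovetail against $\bm^{[q]}$ and $(J^{[q]}:J)$ respectively. I expect no genuine obstacle beyond verifying that the sharp Fedder criterion and the Vassilev-type statement can be applied with compatible choices of $\ba'$ and radical $I$ once the localization has been performed.
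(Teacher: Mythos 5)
Your proof is correct and matches the paper's intended argument; the paper leaves the corollary without an explicit proof precisely because it is the immediate combination of the sharp Fedder criterion with the Vassilev-type containment, which is what you have written out. The only small point worth flagging in the local reduction is that one implicitly uses that the generalized test ideal commutes with localization (so that $(R/\tau_R(\ba^t))_{\bm} = R_{\bm}/\tau_{R_{\bm}}(\ba_{\bm}^t)$) and that sharp $F$-purity passes to localizations, both of which hold but which you might mention in one line.
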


\section{An application to $F$-pure thresholds}
\label{SectionApplicationToFPureThresholds}

We begin by recalling the definition of the $F$-pure threshold.  As before, we assume that $R$ is an $F$-finite reduced ring and that $\ba \cap R^{\circ} \neq \emptyset$.

\begin{definition} \cite[Definition 2.1]{TakagiWatanabeFPureThresh} \cite[Proposition 2.2(3)]{TakagiWatanabeFPureThresh}
Let $R$ be a reduced $F$-finite ring and $\ba \subseteq R$ an ideal such that $\ba \cap R^{\circ} \neq \emptyset$.  We define the $F$-pure threshold, $\fpt(\ba)$ to be
\[
\fpt(\ba) = \sup\{s \in \bR_{\geq 0} | \text{ $(R, \ba^s)$ is $F$-pure} \} = \sup\{s \in \bR_{\geq 0} | \text{ $(R, \ba^s)$ is strongly $F$-pure} \}.
\]
\end{definition}

Our next goal is to show that the $F$-pure threshold can be described using \sfpty.  First we need a lemma.

\begin{lemma}
\label{SharplyFPureImpliesSmallerStronglyFPure}
If $(R, \ba^s)$ is sharply $F$-pure, then $(R, \ba^{s-\epsilon})$ is strongly $F$-pure (and thus also $F$-pure) for all $\epsilon$ satisfying $0 < \epsilon \leq s$.
\end{lemma}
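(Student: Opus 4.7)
The plan is to compare the two exponents $\lceil s(p^e-1)\rceil$ (which governs sharp $F$-purity) with $\lceil (s-\epsilon)p^e\rceil$ (which governs strong $F$-purity) and show that for all sufficiently large $e$ the first dominates, so any splitting witnessing sharp $F$-purity at level $s$ and stage $e$ automatically witnesses strong $F$-purity at level $s-\epsilon$ and stage $e$.

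First I would unpack the definitions. Since $(R,\ba^s)$ is \sfp, there are infinitely many $e>0$ and elements $f_e\in\ba^{\lceil s(p^e-1)\rceil}$ such that $f_e^{1/p^e}F^e:R\to R^{1/p^e}$ splits. To conclude that $(R,\ba^{s-\epsilon})$ is strongly $F$-pure, it is enough by the definition recalled at the start of the section to produce a single $e>0$ and an element $g\in\ba^{\lceil (s-\epsilon)p^e\rceil}$ for which $g^{1/p^e}F^e$ splits.

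The key numerical step is the inequality $\lceil s(p^e-1)\rceil\geq \lceil (s-\epsilon)p^e\rceil$ for $e\gg 0$. One computes
\[
s(p^e-1)-(s-\epsilon)p^e=\epsilon p^e-s,
\]
which is nonnegative as soon as $p^e\geq s/\epsilon$. Since $\lceil\cdot\rceil$ is monotone on the reals, this gives the desired inequality, hence the containment $\ba^{\lceil s(p^e-1)\rceil}\subseteq\ba^{\lceil (s-\epsilon)p^e\rceil}$. Choosing $e$ large enough to satisfy $p^e\geq s/\epsilon$ and also large enough to lie in the infinite set of ``good'' $e$'s provided by \sfpty, I take $g:=f_e$; it lies in $\ba^{\lceil (s-\epsilon)p^e\rceil}$ and $g^{1/p^e}F^e$ splits, verifying strong $F$-purity.

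There is essentially no obstacle here beyond the routine ceiling estimate; the only mild subtlety is the boundary case $\epsilon=s$, where $s-\epsilon=0$ and $\ba^0=R$ by convention, so the requirement $g\in\ba^{\lceil 0\cdot p^e\rceil}=R$ is automatic and the same $f_e$ works. The final parenthetical that strong $F$-purity implies $F$-purity is already quoted in the excerpt from \cite[Proposition 1.5]{TakagiWatanabeFPureThresh}, so nothing further is needed.
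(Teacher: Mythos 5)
Your proof is correct and follows essentially the same route as the paper's: pick a large enough $e$ from the infinite supply of splitting exponents so that $\epsilon p^e \geq s$, observe that this forces $\lceil s(p^e-1)\rceil \geq \lceil (s-\epsilon)p^e\rceil$, and conclude that the same element $f_e$ witnesses strong $F$-purity at level $s-\epsilon$. The only addition you make beyond the paper is the explicit remark about the boundary case $\epsilon=s$, which is harmless and correct.
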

\begin{proof}
Choose $e > 0$ that satisfies $\epsilon p^e > s$ and such that there exists an $f \in \ba^{\lceil s (p^e - 1) \rceil }$ so that the map $f^{1 \over p^e} F^e$ splits.  Then $s(p^e - 1) > p^e(s - \epsilon)$ and so  $\lceil s(p^e - 1) \rceil \geq \lceil p^e(s - \epsilon) \rceil$ which implies that
\[
f \in \ba^{\lceil s (p^e - 1) \rceil } \subseteq \ba^{\lceil (s - \epsilon)(p^e) \rceil}.
\]
But then we see that $(R, \ba^{s - \epsilon})$ is strongly $F$-pure as desired.
\end{proof}

\begin{proposition}
\label{FPureThresholdInTermsOfSharpPurity}
Let $R$ and $\ba$ be as above then,
\[
\fpt(\ba) = \sup\{s \in \bR_{\geq 0} | \text{ the pair $(R, \ba^s)$ is \sfp} \}.
\]
\end{proposition}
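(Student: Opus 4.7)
The plan is to prove the two inequalities separately, using the tools already in hand: Lemma \ref{SharplyFPureImpliesSmallerStronglyFPure} for one direction and Proposition \ref{OneSplittingImpliesAllSplitting} for the other. Write $\sigma := \sup\{s \in \bR_{\geq 0} \mid (R, \ba^s) \text{ is \sfp}\}$ for the quantity on the right, and recall that $\fpt(\ba)$ can equivalently be computed as the supremum over strong $F$-purity.

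For $\sigma \leq \fpt(\ba)$, I would take any $s$ for which $(R, \ba^s)$ is \sfp{ }and invoke Lemma \ref{SharplyFPureImpliesSmallerStronglyFPure} to conclude that $(R, \ba^{s - \epsilon})$ is strongly $F$-pure (hence $F$-pure) for every $0 < \epsilon \leq s$. This forces $\fpt(\ba) \geq s - \epsilon$ for arbitrarily small $\epsilon$, hence $\fpt(\ba) \geq s$; taking the sup over admissible $s$ gives $\fpt(\ba) \geq \sigma$.

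For the reverse inequality $\fpt(\ba) \leq \sigma$, I would start from any $s$ such that $(R, \ba^s)$ is strongly $F$-pure. By definition there exist $e > 0$ and $f \in \ba^{\lceil s p^e \rceil}$ for which $f^{1/p^e} F^e : R \to R^{1/p^e}$ splits. Since $s p^e \geq s(p^e - 1)$ yields $\lceil s p^e \rceil \geq \lceil s(p^e - 1) \rceil$, the inclusion $\ba^{\lceil sp^e \rceil} \subseteq \ba^{\lceil s(p^e - 1) \rceil}$ shows that the same $f$ exhibits a single splitting of the type required in the definition of \sfpty. Proposition \ref{OneSplittingImpliesAllSplitting} then upgrades this single splitting to sharp $F$-purity of $(R, \ba^s)$, so $\sigma \geq s$. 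Taking the sup over such $s$ yields $\sigma \geq \fpt(\ba)$, and the two inequalities combine to give equality.

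There is no real obstacle here; the content is entirely packaged in the two earlier results. The only subtlety worth flagging is making sure the $\epsilon$-buffer in Lemma \ref{SharplyFPureImpliesSmallerStronglyFPure} does not cost anything, which it does not because $\fpt$ is defined as a supremum, so losing $\epsilon$ at each step is harmless after passing to the sup.
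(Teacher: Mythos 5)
Your proof is correct and follows essentially the same route as the paper's: the inequality $\fpt(\ba) \geq \sigma$ comes from Lemma \ref{SharplyFPureImpliesSmallerStronglyFPure} plus passing to the supremum, and the inequality $\fpt(\ba) \leq \sigma$ comes from the fact that strongly $F$-pure implies sharply $F$-pure. The only difference is that you re-derive the latter implication from scratch via $\lceil sp^e\rceil \geq \lceil s(p^e-1)\rceil$ and Proposition \ref{OneSplittingImpliesAllSplitting}, whereas the paper simply cites Remark \ref{RemarkFPureImpliesSharplyFPure}, which already records this observation.
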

\begin{proof}
First note that a strongly $F$-pure pair is always sharply $F$-pure, so we see that the inequality $\leq$ is easy.  The other inequality is immediately implied by Lemma \ref{SharplyFPureImpliesSmallerStronglyFPure}.
\end{proof}

Using this, we can show that the $F$-pure threshold is a certain type of rational number under certain conditions; compare with \cite{BlickleMustataSmithDiscretenessAndRationalityOfFThresholds}, \cite{KatzmanLyubeznikZhangOnDiscretenessAndRationality}, or \cite{HaraMonskyFPureThresholdsAndFJumpingExponents}.

\begin{corollary}
\label{CorollaryFPureThresholdIsRational}
Suppose that $(R, \ba^{\fpt(\ba)})$ is a sharply $F$-pure pair.  Then $\fpt(\ba)$ is a rational number and, furthermore, there exists some $e > 0$ such that $\fpt(\ba)(p^e-1) \in \bZ$.  In other words, $\fpt(\ba)$ can be written as a quotient $m \over n$ where $m, n \in \bZ$ and $p$ does not divide $n$.
\end{corollary}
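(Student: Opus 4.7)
\medskip

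The plan is to extract rationality from a single splitting that witnesses sharp $F$-purity at $t=\fpt(\ba)$, then bootstrap via the two structural results we already have, namely Proposition \ref{OneSplittingImpliesAllSplitting} (one splitting suffices for sharp $F$-purity) and Proposition \ref{FPureThresholdInTermsOfSharpPurity} ($\fpt(\ba)$ is the supremum of $s$ with $(R, \ba^s)$ sharply $F$-pure).

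Set $t = \fpt(\ba)$. Since $(R, \ba^t)$ is sharply $F$-pure, fix any single $e > 0$ and an element $f \in \ba^{\lceil t(p^e - 1) \rceil}$ such that $f^{1/p^e} F^e : R \to R^{1/p^e}$ splits. Let $N = \lceil t(p^e - 1) \rceil \in \bZ$ and define the candidate rational number
\[
t' \;=\; \frac{N}{p^e - 1}.
\]
By construction $t' \geq t$ (since $N \geq t(p^e-1)$), and because $t'(p^e - 1) = N$ is already an integer we have $\lceil t'(p^e - 1) \rceil = N$. Thus the very same element $f$ lies in $\ba^{\lceil t'(p^e - 1) \rceil}$, and the same splitting $f^{1/p^e} F^e$ witnesses a single instance of sharp $F$-purity for the pair $(R, \ba^{t'})$ at level $e$.

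Now apply Proposition \ref{OneSplittingImpliesAllSplitting} to promote this single splitting to full sharp $F$-purity: the pair $(R, \ba^{t'})$ is sharply $F$-pure. By Proposition \ref{FPureThresholdInTermsOfSharpPurity} this forces $t' \leq \fpt(\ba) = t$. Combined with the inequality $t' \geq t$ established above, we conclude $t = t' = N/(p^e - 1)$. Since $\gcd(p, p^e - 1) = 1$, this is a presentation of $\fpt(\ba)$ as $m/n$ with $p \nmid n$, as required.

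The only subtle point to check is that $\lceil t'(p^e - 1) \rceil = N$, but this is immediate from the definition of $t'$ and costs no real work; the real content is that sharp $F$-purity at the threshold automatically produces a witness integer $N$ such that the rescaled exponent $N/(p^e - 1)$ is itself sharply $F$-pure and hence bounded above by the threshold. There is no substantial obstacle beyond correctly lining up the ceilings.
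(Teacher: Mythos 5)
Your proof is correct and follows essentially the same approach as the paper: both exploit the slack in the ceiling $\lceil \fpt(\ba)(p^e-1) \rceil$ together with Proposition \ref{FPureThresholdInTermsOfSharpPurity} to conclude $\fpt(\ba)(p^e-1)$ must already be an integer. The only cosmetic difference is that you argue directly by constructing $t' = N/(p^e-1)$ and sandwiching $t' = \fpt(\ba)$, whereas the paper runs the same idea as a contradiction using a small $\epsilon > 0$.
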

\begin{proof}
Assume that $\fpt(\ba)$ cannot be written as such a quotient.  Now suppose $(R, \ba^{\fpt(\ba)})$ is sharply $F$-pure, then there exists an $f \in \ba^{\lceil \fpt(\ba)(p^e - 1) \rceil}$ such that $f^{1 \over p^e} F^e : R \rightarrow R^{1 \over p^e}$ splits.  But then note that $f \in \ba^{\lceil (\fpt(\ba)+{\epsilon}) (p^e - 1) \rceil}$ for some sufficiently small $\epsilon > 0$, since $\fpt(\ba)(p^e - 1)$ is not an integer.  This contradicts Proposition \ref{FPureThresholdInTermsOfSharpPurity}.
\end{proof}

\begin{remark}
Mircea Musta{\c{t}}{\v{a}} has suggested the following argument which gives a partial converse to Corollary \ref{CorollaryFPureThresholdIsRational}.  Consider the situation where $(R, \bm)$ is an $F$-finite regular local ring and $\ba = (f)$ is principal.  Suppose that $\fpt(f)$ is a rational number such that $\fpt(f)(p^e - 1) \in \bZ$ for some $e > 0$ and $\fpt(f) < 1$. We will sketch Musta{\c{t}}{\v{a}}'s argument that $(R, f^{\fpt(f)})$ is (sharply) $F$-pure.

Following the notation of \cite{MustataTakagiWatanabeFThresholdsAndBernsteinSato}, we set $\nu_f(p^e) = \max \{s \in \bZ_{>0} | f^s \notin \bm^{[p^e]}\}$.  Fix an $e > 0$ and note that $p^d \nu_f(p^e) \leq \nu_f(p^{e+d}) \leq p^d(\nu_f(p^e) + 1)$ since $f^r \in \bm^{[p^e]}$ if and only if $f^{rp^d} \in \bm^{[p^{e+d}]}$.  Dividing through by $p^{d+e}$ gives us
\[
{ \nu_f(p^e) \over p^e} \leq {\nu_f(p^{e+d}) \over p^{e+d}} \leq { \nu_f(p^e) + 1 \over p^e}.
\]
We then take the limit as $d$ goes to infinity and obtain, by \cite[Remark 1.5]{MustataTakagiWatanabeFThresholdsAndBernsteinSato}, that
$ { \nu_f(p^e) \over p^e} \leq \fpt(f) \leq { \nu_f(p^e) + 1 \over p^e}$,
for every $e > 0$.  Now multiplying through by $p^e$ we get
\begin{equation}
\label{KeyEquationToRound}
\nu_f(p^{e}) \leq p^e \fpt(f) \leq \nu_f(p^e) + 1
\end{equation}
for every $e > 0$.  Note that both inequalities are actually strict since under our hypotheses, $p^e \fpt(f)$ is never an integer.

Fix an $e > 0$ such that $(p^e - 1)\fpt(f)$ is an integer.  To show that $(R, f^{\fpt(f)})$ sharply $F$-pure, it is sufficient to show that $\nu_f(p^e) = \lceil\fpt(f)(p^e-1) \rceil = \fpt(f)(p^e-1)$ for some $e > 0$ by Theorem \ref{SharpFeddersCriterion}.
First observe that the fractional part $\{ \fpt(f) p^e \} = \fpt(f) p^e - \lfloor \fpt(f) p^e \rfloor$ is equal to $\fpt(f)$ because $\{ \fpt(f) p^e \} = \{ \fpt(f) (p^e - 1) + \fpt(f) \} = \{ \fpt(f) \}$.  By rounding down equation \ref{KeyEquationToRound}, we see that
\[
\nu_f(p^e) = \lfloor \fpt(f) p^e \rfloor = \fpt(f) p^e - \fpt(f) = \fpt(f) (p^e - 1) < \nu_f(p^e) + 1,
\]
which completes the argument.
\end{remark}

We conclude with one more application of Lemma \ref{SharplyFPureImpliesSmallerStronglyFPure}.

\begin{corollary}
\label{SharpFPureTypeImpliesLC}
If $(R, \ba^t)$ has dense \sfp{ }type and $X = \Spec R$ is normal and $\bQ$-Gorenstein, then $(X, \ba^t)$ is log canonical.
\end{corollary}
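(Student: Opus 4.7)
The plan is to combine Lemma \ref{SharplyFPureImpliesSmallerStronglyFPure} with the known correspondence between dense $F$-pure type and log canonicity for normal $\bQ$-Gorenstein pairs, and then take a limit in the exponent.

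First I would fix a model of $(R, \ba^t)$ over a finitely generated $\bZ$-subalgebra $A$, together with a Zariski dense set $\Sigma \subseteq \Spec A$ of closed points over which the fiber $(R_\mu, \ba_\mu^t)$ is \sfp. Fix any $\epsilon$ with $0 < \epsilon \leq t$. By Lemma \ref{SharplyFPureImpliesSmallerStronglyFPure}, applied at each $\mu \in \Sigma$, the pair $(R_\mu, \ba_\mu^{t-\epsilon})$ is strongly $F$-pure, and in particular $F$-pure. Crucially, $\Sigma$ does not depend on $\epsilon$, so for every $\epsilon > 0$ the pair $(R, \ba^{t-\epsilon})$ has dense $F$-pure type in the classical sense.

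Next, I would invoke the theorem of Hara--Watanabe and Takagi (see \cite{HaraWatanabeFRegFPure} and \cite{TakagiWatanabeFPureThresh}) that dense $F$-pure type implies log canonicity in the normal $\bQ$-Gorenstein setting. This yields that $(X, \ba^{t-\epsilon})$ is log canonical for every $\epsilon > 0$.

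Finally, I would let $\epsilon \to 0$. On a single log resolution $\pi \colon Y \to X$ of $\ba$ with $\ba \cdot \mathcal{O}_Y = \mathcal{O}_Y(-\sum_i b_i E_i)$ and relative canonical divisor $K_{Y/X} = \sum_i a_i E_i$, the inequalities $a_i - (t-\epsilon) b_i \geq -1$ holding for all $i$ and all $\epsilon > 0$ imply $a_i - t b_i \geq -1$ for all $i$, so $(X, \ba^t)$ is log canonical. The argument is essentially a formal consequence of Lemma \ref{SharplyFPureImpliesSmallerStronglyFPure} and the classical characteristic-zero theorem; the only point deserving attention is the uniformity of $\Sigma$ in $\epsilon$, which is automatic from the lemma since it alters only the exponent applied to $\ba$ and not the underlying ring or ideal.
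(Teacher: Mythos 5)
Your proposal is correct and follows essentially the same route as the paper: reduce the exponent by $\epsilon$, apply Lemma \ref{SharplyFPureImpliesSmallerStronglyFPure} to upgrade sharp $F$-purity to (strong) $F$-purity on a dense set of fibers, invoke the known ``dense $F$-pure type implies log canonical'' theorem (the paper cites \cite[Theorem 3.8]{TakagiInversion}), and then let $\epsilon \to 0$. You spell out two points the paper leaves implicit --- the uniformity of the dense set $\Sigma$ in $\epsilon$, and the log-resolution computation showing the discrepancy inequalities pass to the limit --- but these are the same observations compressed in the paper's one-line remark that $(X,\ba^t)$ is log canonical iff $(X,\ba^{t-\epsilon})$ is for all small $\epsilon>0$.
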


For a definition of log canonical singularities see \cite{KollarMori}.  By \emph{dense \sfp{ }type} we simply mean to generalize the usual definition of \emph{dense $F$-pure type}; see, for example, \cite[Definition 1.9]{TakagiWatanabeFPureThresh}, by replacing the phrase ``$F$-pure'' wherever it occurs, with ``sharply $F$-pure''.  Roughly, \emph{dense \sfp{ }type} means that, after reducing the pair to a collection of positive characteristic models, a Zariski dense set of those models is sharply $F$-pure.  In order to prove Corollary \ref{SharpFPureTypeImpliesLC}, one could repeat the argument of \cite[Theorem 3.3]{HaraWatanabeFRegFPure} or \cite[Theorem 3.8]{TakagiInversion}.  We instead use Lemma \ref{SharplyFPureImpliesSmallerStronglyFPure} and apply \cite[Theorem 3.8]{TakagiInversion}.

\begin{proof}
It is easy to see that $(X, \ba^t)$ is log canonical if and only if $(X, \ba^{t - \epsilon})$ is log canonical for every $\epsilon$ satisfying $t > \epsilon > 0$.  But note that if $(R, \ba^t)$ has dense \sfp{ }type, then $(R, \ba^{t - \epsilon})$ has dense $F$-pure type by Proposition \ref{SharplyFPureImpliesSmallerStronglyFPure}.  But then by \cite[Theorem 3.8]{TakagiInversion}, $(X, \ba^{t - \epsilon})$ is log canonical.
\end{proof}

\begin{remark}
Suppose that $(R, \ba^t)$ has dense $F$-pure type and $t = a/b$ is a rational number where $a$ and $b$ are integers.  Further suppose that $p$ is a prime that doesn't appear as a factor of $b$ and note that all but a finite number of primes satisfy this property.  Therefore, for infinitely many $e > 0$, $t(p^e - 1)$ is an integer.  It thus follows that $(R, \ba^t)$ has dense \sfp{ }type.
\end{remark}

\bibliographystyle{skalpha}
\bibliography{ThesisBib}

\end{document}